\def\>{\rangle}
\def\<{\langle}
\newtheorem{theorem}{Theorem}[section]
\newtheorem{prop}[theorem]{Proposition}
\newtheorem{corollary}[theorem]{Corollary}
\theoremstyle{definition}
\newtheorem{defi}[theorem]{Definition}
\newtheorem{example}[theorem]{Example}
\theoremstyle{remark}
\newtheorem{remark}[theorem]{Remark}
\newtheorem{pb}[theorem]{Problem}
\numberwithin{equation}{section}
\newcommand{\bN}{{\mathbb N}}
\newcommand{\bM}{{\mathbb M}}
\newcommand{\bC}{{\mathbb C}}
\newcommand{\Tr}{\mathrm{Tr}}
\newcommand{\ba}{\begin{array}}
\newcommand{\ea}{\end{array}}
\newcommand{\be}{\begin{eqnarray*}}
\newcommand{\ee}{\end{eqnarray*}}
\newcommand{\beg}{\begin{eqnarray}}
\newcommand{\eeg}{\end{eqnarray}}
\newcommand{\beq}{\begin{equation}}
\newcommand{\eeq}{\end{equation}}
\newcommand{\beqn}{\begin{equation*}}
\newcommand{\eeqn}{\end{equation*}}
\renewcommand{\Re}{\mathrm{Re}}
\newcommand*\oline[1]{%
  \vbox{%
    \hrule height 0.5pt
    \kern0.25ex
    \hbox{%
      \kern-0.1em
      \ifmmode#1\else\ensuremath{#1}\fi
      \kern-0.1em
    }
  }
}
\newcommand{\id}{\mathrm{id}}
\newcommand{\MM}{\mathbb{M}_3(\mathbb{C})}
\title{Generalizing Choi-like maps}
\author{Dariusz Chru\'{s}ci\'{n}ski}
\address{Institute of Physics, Nicolaus Copernicus University\\
	Grudzi\c{a}dzka 5/7, 87--100 Toru\'{n}, Poland}
\email{darch@fizyka.umk.pl}
\author{Marcin Marciniak}
\address{Institute of Theoretical Physics and Astrophysics \\ Faculty of Mathematics, Physics and Informatics \\ University of Gda{\'n}sk \\ 80-309 Gda{\'n}sk, Poland}
\email{matmm@ug.edu.pl}
\author{Adam Rutkowski}
\address{Institute of Theoretical Physics and Astrophysics, Faculty of Mathematics, Physics and Informatics, University of Gda{\'n}sk, 80-309 Gda{\'n}sk, Poland}
\email{fizar@ug.edu.pl}
\subjclass[2010]{Primary: 46L60, 15B48, 81P40; Secondary: 81Q10, 46L05}
\keywords{positive map, indecomposable, Choi matrix, PPT, separable state}
\begin{document}
	\maketitle
	
\begin{abstract}
A problem of further generalization of generalized Choi maps $\Phi_{[a,b,c]}$ acting on $\bM_3$ introduced by Cho, Kye and Lee is discussed. Some necessary conditions for positivity of the generalized maps are provided as well as some sufficient conditions. Also some sufficient condition for decomposability of these maps is shown.
\end{abstract}

\section{Introduction}
The paper concerns linear maps acting between matrix algebras. 
For $n\in\bN$, by $\bM_n$ we denote the algebra of square $n\times n$-matrices $X=(x_{ij})$ with complex coefficients. Let $\Phi:\bM_m\to\bM_n$ be a linear map. We say that $\Phi$ is a \textit{positive map} if $\Phi(X)$ is a positive definite matrix in $\bM_n$ for every positive definite matrix $X\in \bM_m$. Remind that for $k\in\bN$ there are the following identifications: $\bM_k(\bM_m)=\bM_k\otimes\bM_m=\bM_{km}$, where $\bM_k(\bM_m)$ denotes the C*-algebra of $k\times k$-matrices $(X_{ij})$ with $X_{ij}\in \bM_m$. Define a map $\Phi_k:\bM_k(\bM_m)\to\bM_k(\bM_n)$ by $\Phi_k(X_{ij})=(\Phi(X_{ij}))$. We say that the map $\Phi$ is \textit{$k$-positive} if $\Phi_k$ is a positive map. If $\Phi$ is $k$-positive for every $k\in\bN$ then it is called a \textit{completely positive map}.

The transposition map $\theta_n:\bM_n\to\bM_n:(x_{ij})\mapsto (x_{ji})$ is known to be not completely positive map (even not $2$-positive) \cite{Sto12}. We say that a map $\Phi:\bM_n\to\bM_n$ is \textit{$k$-copositive} (respectively \textit{completely copositive}) if the composition $\theta_n\circ\Phi$ is $k$-positive (respectively completely positive). A map $\Phi$ is said to be \textit{decomposable} if it can be expressed as a sum $\Phi=\Phi_1+\Phi_2$ where $\Phi_1$ is a completely positive map while $\Phi_2$ is a completely copositive one. Otherwise $\Phi$ is called \textit{indecomposable}. For further details we refer the reader to the book of St{\o}rmer \cite{Sto12}.

The systematic study of positive maps on C*-algebras was started by a pioneering work of Erling St{\o}rmer in the 60's of the last century \cite{Sto63}. Although the definitions are easy and the above notions seem to be rather elementary, the problem of description of all positive maps is still unsolved even in the case of low dimensional matrix algebras. In particular, still there is no effective characterization of decomposable maps. In recent years the importance of the theory of positive maps drastically increased because of its applications in mathematical physics, especially in rapidly emerging theory of quantum information \cite{HHH96,MajMar01,ChruSarb2014}.
 
It was shown by St{\o}rmer \cite{Sto63} and Woronowicz \cite{Wor76a} that every positive map $\Phi:\bM_2\to\bM_2$ (St{\o}rmer) or $\Phi:\bM_2\to\bM_3$ (Woronowicz) is decomposable. The first example of an indecomposable positive map was given by Choi \cite{Choi75b,ChoiLam77}. It is a map $\Phi:\bM_3\to\bM_3$ given by
\begin{equation}
\Phi(X)=\left(\begin{array}{ccc}
x_{11}+x_{33} & -x_{12} & -x_{13}\\
-x_{21} & x_{22}+x_{11} & -x_{23}\\
-x_{31} & -x_{32} & x_{33}+x_{22}
\end{array}\right).
\label{Choi}
\end{equation}
Although there is no systematic prescription of indecomposable maps, there are numerous examples scattered across literature \cite{Robertson83,Osaka91,Osaka92,Osaka93,Tang86,Ha98,Ha02,Ha03,TanTom88,ChruSarb2012b,MilOlk15,MarRut17}.

An interesting approach to generalization of \eqref{Choi} was presented in \cite{CKL92}.
For any triple of nonnegative numbers $a,b,c$ define a map $\Phi_{[a,b,c]}:\bM_3\to\bM_3$ by	
	\begin{equation}
	\Phi_{[a,b,c]}(X)=\left(\begin{array}{ccc}
	ax_{11}+bx_{22}+cx_{33} & -x_{12} & -x_{13}\\
	-x_{21} & cx_{11}+ax_{22}+bx_{33} & -x_{23}\\
	-x_{31} & -x_{32} & bx_{11}+cx_{22}+ax_{33}
	\end{array}\right)\ ,\label{}
	\end{equation}
	with $x_{ij}$ being the matrix elements of $X\in\MM$. One proves
\begin{theorem}[\cite{CKL92}] 
\label{TH-korea} 
The map $\Phi_{[a,b,c]}$ is positive but not completely positive if and only if 
\begin{enumerate}
\item $0\leq a<2\ $, 
\item $a+b+c\geq2\ $, 
\item if $a\leq1\ $, then $\ bc\geq(1-a)^{2}$. 
\end{enumerate}
Moreover, being positive it is indecomposable if and only if $4bc<(2-a)^{2}$.
\end{theorem}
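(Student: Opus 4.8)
The plan is to prove three facts, one for each assertion of the statement: (i) $\Phi_{[a,b,c]}$ is completely positive if and only if $a\ge 2$; (ii) $\Phi_{[a,b,c]}$ is positive if and only if conditions (2) and (3) hold; (iii) a positive $\Phi_{[a,b,c]}$ is decomposable if and only if $4bc\ge(2-a)^2$. Together these give the theorem, since ``positive and not completely positive'' then means ``positive and $a<2$'', i.e. conditions (1), (2) and (3). For (i) I would form the Choi matrix $C=\sum_{i,j=1}^3 e_{ij}\otimes\Phi_{[a,b,c]}(e_{ij})\in\bM_3\otimes\bM_3$ ($e_{ij}$ the matrix units) and order the product basis so that the three ``diagonal'' vectors $e_i\otimes e_i$ come first. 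A short computation shows that $C$ is block-diagonal: the six vectors $e_i\otimes e_j$, $i\ne j$, carry only nonnegative diagonal entries drawn from $\{a,b,c\}$, whereas on the span of $e_1\otimes e_1,e_2\otimes e_2,e_3\otimes e_3$ the matrix $C$ equals $(a+1)\un_3-J$, where $J$ has all entries equal to $1$ and hence has spectrum $\{a-2,\,a+1,\,a+1\}$. Thus $C\ge0$, i.e. $\Phi_{[a,b,c]}$ is completely positive, exactly when $a\ge2$; in that case it is in particular positive, which is why $a\ge2$ is excluded in (1).

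For (ii), a linear map is positive iff it maps every $\xi\xi^*$ to a positive semidefinite matrix, so positivity of $\Phi_{[a,b,c]}$ is equivalent to $\langle\eta,\Phi_{[a,b,c]}(\xi\xi^*)\eta\rangle\ge0$ for all $\xi,\eta\in\bC^3$. Expanding this expression and using $\big|\sum_j\xi_j\overline{\eta_j}\big|\le\sum_j|\xi_j|\,|\eta_j|$ (the bound being attained for a suitable choice of phases of the $\xi_j$ and $\eta_j$), one reduces positivity to the real inequality
\[
a\sum_{j}x_j^2y_j^2+b\sum_{j}x_{j+1}^2y_j^2+c\sum_{j}x_j^2y_{j+1}^2\ \ge\ 2\big(x_1x_2y_1y_2+x_2x_3y_2y_3+x_3x_1y_3y_1\big)
\]
for all $x,y\in\bR_{+}^3$ (indices mod $3$); equivalently, for every $x\in\bR_{+}^3$ the symmetric matrix $M_x$ with diagonal $\big(ax_1^2+bx_2^2+cx_3^2,\ cx_1^2+ax_2^2+bx_3^2,\ bx_1^2+cx_2^2+ax_3^2\big)$ and off-diagonal entries $-x_1x_2,\,-x_1x_3,\,-x_2x_3$ must be copositive. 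Necessity of (2) and (3) follows from particular configurations: $\xi=\eta=(1,1,1)$ makes the displayed inequality read $3(a+b+c-2)\ge0$, while a configuration supported on two coordinates (say $\xi,\eta$ supported on coordinates $1,2$ with $x_1y_1=x_2y_2$) makes it, after minimizing over the free ratio, read $\sqrt{bc}\ge1-a$, i.e. (3) when $a\le1$. For the converse I would apply the classical criterion for copositivity of a $3\times3$ symmetric matrix (nonnegative diagonal; $m_{ij}+\sqrt{m_{ii}m_{jj}}\ge0$ for $i<j$; one further scalar inequality) to $M_x$: the estimate $m_{11}m_{22}\ge(a+\sqrt{bc})^2x_1^2x_2^2$ turns the second condition into a consequence of (3), while the third condition, whose worst case is $x_1=x_2=x_3$, reduces there exactly to $a+b+c\ge2$, with strict slack away from these extremal configurations.

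For (iii), the ``if'' direction (decomposability when $4bc\ge(2-a)^2$) is obtained from an explicit splitting $\Phi_{[a,b,c]}=\Phi_1+\theta_3\circ\Phi_2$ with $\Phi_1,\Phi_2$ completely positive, equivalently a decomposition $C=C_1+C_2$ of the Choi matrix with $C_1\ge0$ and $C_2$ positive under partial transposition. Since the four-dimensional subspaces $\mathrm{span}\{e_i\otimes e_i,\,e_i\otimes e_j,\,e_j\otimes e_i,\,e_j\otimes e_j\}$ are invariant under partial transposition and $C$ decouples along them, it suffices to exhibit in each of them the required pair of positive semidefinite $2\times2$ and $3\times3$ blocks, which is possible precisely when $4bc\ge(2-a)^2$. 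For the ``only if'' direction I would use the duality between the cone of decomposable maps and the set of states with positive partial transpose: a positive $\Phi_{[a,b,c]}$ is indecomposable iff there exists $\rho\in\bM_3\otimes\bM_3$ with $\rho\ge0$ and $\rho^{\Gamma}\ge0$ but $(\mathrm{id}\otimes\Phi_{[a,b,c]})(\rho)\not\ge0$; such a $\rho$, built from a suitably tuned vector of ``product plus flip'' type, is available exactly when $4bc<(2-a)^2$.

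The step I expect to be the main obstacle is the sufficiency half of (ii). Once it has been rephrased as copositivity of $M_x$, the subtlety is that copositivity is strictly weaker than positive semidefiniteness: any attempt to dominate the cross terms $2x_ix_jy_iy_j$ by the diagonal terms through Cauchy--Schwarz or the arithmetic--geometric mean inequality alone recovers only the condition $4bc\ge(2-a)^2$ (which is precisely the decomposable range), so one must genuinely use the sign constraints $x_j,y_j\ge0$ (the last inequality in the copositivity criterion) in order to bring $a+b+c\ge2$ into play. A secondary difficulty, in the ``only if'' part of (iii), is to pin down a single state that is simultaneously PPT and detected by $\Phi_{[a,b,c]}$, the decomposable cone being comparatively large.
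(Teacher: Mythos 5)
The paper itself offers no proof of Theorem \ref{TH-korea}: it is quoted as background from Cho--Kye--Lee \cite{CKL92}, so there is nothing internal to compare your argument against. Judged on its own merits, your outline has the right architecture, and three of its pieces are essentially complete and correct: the Choi-matrix computation giving complete positivity iff $a\geq 2$ (the block $(a+1)\un_3-J$ with spectrum $\{a-2,a+1,a+1\}$, all other entries nonnegative diagonal), the necessity of $a+b+c\geq 2$ and $a+\sqrt{bc}\geq 1$ via the test vectors you choose, and the ``if'' half of the decomposability claim, where splitting each off-diagonal $-1$ as $-s-(1-s)$ yields $C=P+R$ with $P\geq 0$ (needs $a\geq 2s$) and $R^{\Gamma}\geq 0$ (needs $bc\geq(1-s)^2$), an $s\in[0,1]$ satisfying both existing exactly when $4bc\geq(2-a)^2$.

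The two remaining pieces are genuine gaps rather than unpolished details. For the sufficiency of (2)--(3): the reduction to copositivity of $M_x$ for every $x\in\bR_+^3$ is correct, and your Cauchy--Schwarz estimate $m_{11}m_{22}\geq(a+\sqrt{bc})^2x_1^2x_2^2$ does dispose of the pairwise conditions, but the third scalar condition of the copositivity criterion is an inequality in $x$ whose validity for \emph{all} $x\geq 0$ is the entire content of the hard direction. Asserting that ``the worst case is $x_1=x_2=x_3$'' is an unproved optimization claim --- the minimizer could a priori sit on a face of the orthant or move with $(a,b,c)$ --- and substantiating it is comparable in difficulty to the direct case analysis in \cite{CKL92}; you correctly identify this as the main obstacle, but it is not bridged. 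For the ``only if'' half of decomposability, invoking the duality with PPT matrices (Theorem \ref{t:dec}(ii)) is the right move, but no witness is produced: one must exhibit an explicit PPT $\rho$ with $\Tr(\rho\, C_{\Phi_{[a,b,c]}})<0$ for every triple with $4bc<(2-a)^2$, and constructing and tuning such a $\rho$ (in the standard construction, a diagonal part with weights $\epsilon^{2}$ and $\epsilon^{-2}$ distributed cyclically plus a multiple of the maximally entangled projection) is the substantive step. As it stands the proposal is a sound plan with its two hardest verifications left open.
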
 
Slightly different class was considered by Kye \cite{Kye92}
	\begin{equation}
	\Phi_{[a;c_{1},c_{2},c_{3}]}[X]=\left(\begin{array}{ccc}
	ax_{11}+c_{1}x_{33} & -x_{12} & -x_{13}\\
	-x_{21} & c_{2}x_{11}+ax_{22} & -x_{23}\\
	-x_{31} & -x_{32} & c_{3}x_{22}+ax_{33}
	\end{array}\right)\ ,\label{}
	\end{equation}
	with $a,c_{1},c_{2},c_{3}\geq0$. He proved the following
	\begin{theorem} \label{TH-ccc} 
A map $\Phi_{[a;c_{1},c_{2},c_{3}]}$
		is positive but not completely positive if and only if 
		\begin{enumerate}
			\item $1\leq a<2\ $, 
			\item $c_{1}c_{2}c_{3}\geq(2-a)^{3}$. 
		\end{enumerate}
		Moreover, $\Phi_{[a;c_{1},c_{2},c_{3}]}$ is indecomposable and atomic.
	\end{theorem}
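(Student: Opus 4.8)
The plan is to adapt the arguments available for the Choi map and for the Cho--Kye--Lee family $\Phi_{[a,b,c]}$: test positivity on rank-one projections and reduce it to a single polynomial inequality in $u=|\xi_1|^2$, $v=|\xi_2|^2$, $w=|\xi_3|^2$, which an appropriate application of the AM--GM inequality closes; treat complete positivity through the Choi matrix; and obtain indecomposability and atomicity by exhibiting a PPT state that $\Phi$ fails to map to a positive operator.

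Write $\Phi:=\Phi_{[a;c_1,c_2,c_3]}$. A linear map on $\MM$ is positive iff it is nonnegative on all rank-one projections, so the issue is whether $M(\xi):=\Phi(\xi\xi^*)\ge0$ for every $\xi\in\bC^3$. A direct computation gives $M(\xi)=\Delta(\xi)-\xi\xi^*$, where $\Delta(\xi)=\mathrm{diag}\!\big((a+1)u+c_1w,\ c_2u+(a+1)v,\ c_3v+(a+1)w\big)$. Since $M(\xi)$ is a rank-one downward perturbation of a positive semidefinite matrix it has at most one negative eigenvalue, whence $\Phi$ is positive iff $\det M(\xi)\ge0$ for all $\xi$. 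Expanding,
\[
\det M(\xi)=ac_2c_3\,u^2v+ac_1c_3\,v^2w+ac_1c_2\,uw^2+(a^2-1)\big(c_2u^2w+c_3uv^2+c_1vw^2\big)+\big(a^3+c_1c_2c_3-3a-2\big)uvw .
\]
For $a\ge1$ every coefficient but that of $uvw$ is nonnegative, and applying AM--GM separately to $\{ac_2c_3u^2v,\,ac_1c_3v^2w,\,ac_1c_2uw^2\}$ and to $\{c_2u^2w,\,c_3uv^2,\,c_1vw^2\}$ yields, with $t:=c_1c_2c_3$ and $s:=t^{1/3}+a$,
\[
\det M(\xi)\ \ge\ \big(3a\,t^{2/3}+3(a^2-1)\,t^{1/3}+a^3+t-3a-2\big)uvw=\big(s^3-3s-2\big)uvw=(s-2)(s+1)^2\,uvw .
\]
Under (1)--(2) we have $s=t^{1/3}+a\ge(2-a)+a=2$, so $\det M(\xi)\ge0$; the cases with some of $u,v,w$ equal to $0$ are immediate. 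Hence (1)--(2) imply positivity.

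For the converse, evaluating $M(\xi)\ge0$ on $\xi$ with $\xi_3=0$ and $|\xi_1|^2\to0$ forces $a\ge1$; and, since positivity of $M(\xi)$ is equivalent (for invertible $\Delta(\xi)$) to $\Tr\!\big(\Delta(\xi)^{-1}\xi\xi^*\big)\le1$, i.e. to $u/d_1+v/d_2+w/d_3\le1$ with $d_i=\Delta(\xi)_{ii}$, testing this along the ray $c_1w=\lambda u$, $c_2u=\lambda v$, $c_3v=\lambda w$ with $\lambda=(c_1c_2c_3)^{1/3}$ --- where the left-hand side collapses to $3/(a+1+\lambda)$ --- forces $\lambda\ge2-a$, which is (2). (If some $c_i=0$, a limiting ray makes that sum exceed $1$ once $a<2$, so a positive $\Phi$ with $a<2$ automatically has all $c_i>0$ and the ray is genuine.) As for complete positivity, the Choi matrix $C_\Phi=\sum_{i,j}e_{ij}\otimes\Phi(e_{ij})$ satisfies $\langle\Omega,C_\Phi\Omega\rangle=3a-6$ for $\Omega=\sum_ke_k\otimes e_k$, so $\Phi$ is not completely positive whenever $a<2$; while for $a\ge2$ the decomposition $\Phi=\Phi_{[2;0,0,0]}+\Psi$, with $\Psi$ a manifestly completely positive diagonal map and $C_{\Phi_{[2;0,0,0]}}=3\sum_ke_{kk}\otimes e_{kk}-|\Omega\rangle\langle\Omega|\ge0$, shows $\Phi$ to be completely positive. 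Combined with the positivity part, this gives the claimed equivalence.

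Finally, for indecomposability and atomicity I would use that a positive map $\Phi$ is decomposable iff $(\id\otimes\Phi)(\rho)\ge0$ for every PPT state $\rho$, and that $\Phi$ is atomic as soon as it fails to be nonnegative on some PPT state $\rho$ for which both $\rho$ and its partial transpose have Schmidt number at most $2$. So the task is to produce, for every admissible $(a;c_1,c_2,c_3)$, a state $\rho$ of this kind with $(\id\otimes\Phi)(\rho)\not\ge0$. The natural candidates are low-rank PPT states supported on subspaces adapted to $\Phi$ --- built, on the boundary locus $c_1c_2c_3=(2-a)^3$, from the contact data $\xi_0,\,\eta_0\in\ker M(\xi_0)$, and elsewhere by a deformation of these --- whose ranges contain no product vector; for such a $\rho$ one checks positivity of the partial transpose and exhibits a $\psi$ with $\langle\psi,(\id\otimes\Phi)(\rho)\psi\rangle<0$, in parallel with the treatments of the Choi map and of $\Phi_{[a,b,c]}$. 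I expect this to be the main obstacle: choosing $\rho$ and verifying simultaneously its PPT property, the Schmidt-number bound, and the negativity of the relevant expectation is an explicit but somewhat delicate matrix computation.
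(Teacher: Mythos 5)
A preliminary remark: the paper states this theorem as a citation of Kye's result \cite{Kye92} and supplies no proof of its own, so your proposal can only be judged on its own merits.

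The first half of your argument --- that $\Phi_{[a;c_1,c_2,c_3]}$ is positive but not completely positive iff $1\le a<2$ and $c_1c_2c_3\ge(2-a)^3$ --- is essentially complete and correct. The identification $\Phi(\xi\xi^*)=\Delta(\xi)-\xi\xi^*$, the determinant expansion, the AM--GM step yielding $(s-2)(s+1)^2uvw$ with $s=a+(c_1c_2c_3)^{1/3}$, the extremal ray $c_1w=\lambda u$, $c_2u=\lambda v$, $c_3v=\lambda w$ for the necessity of (2), the limit $\xi=(\epsilon,1,0)$, $\epsilon\to0$ for $a\ge1$, and the Choi-matrix computations $\langle\Omega,C_\Phi\Omega\rangle=3a-6$ and $C_{\Phi_{[2;0,0,0]}}\ge0$ all check out. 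One small repair: ``at most one negative eigenvalue, whence positive iff $\det\ge0$'' is not literally an equivalence when the middle eigenvalue of $M(\xi)$ vanishes; the clean route is to note that for $a\ge1$ all $1\times1$ and $2\times2$ principal minors of $M(\xi)$ are automatically nonnegative (the $2\times2$ ones reduce to $a^2\ge1$ plus nonnegative terms), so $\det M(\xi)\ge0$ does suffice for positive semidefiniteness, while in the necessity direction you only use the trivial implication.

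The genuine gap is the final clause of the theorem. For indecomposability and atomicity you offer only a program --- ``the natural candidates are low-rank PPT states\dots I expect this to be the main obstacle'' --- and no witness is constructed or verified. That witness \emph{is} the nontrivial content of this part of the statement: one must exhibit, for every admissible $(a;c_1,c_2,c_3)$, an explicit $\rho$ with the required structure and show $\Tr(\rho\, C_\Phi)<0$, checking the PPT property (for indecomposability) and the Schmidt-number bounds (for atomicity). Until that is done, the last sentence of the theorem is unproved. Note also that your stated criterion for atomicity is slightly off: the witness needs Schmidt number at most $2$ for both $\rho$ and $\rho^\Gamma$, but it need not itself be PPT; requiring PPT in addition imposes a stronger constraint than atomicity demands and may steer you toward a harder construction than necessary. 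Since atomicity implies indecomposability, a single witness in $V_2\cap V^2$ with $\Tr(\rho\,C_\Phi)<0$ would not by itself give indecomposability unless it also happens to be PPT, so in general two (possibly coinciding) witnesses must be produced.
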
 
Interestingly, Osaka shown 
\begin{theorem}[\cite{Osaka92}] \label{TH-Osaka} 
A map $\Phi_{[1;c_{1},c_{2},c_{3}]}$ is extremal if $c_{1}c_{2}c_{3}=1$. \end{theorem}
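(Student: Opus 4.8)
The plan is to reduce the statement to the extremality of the Choi map itself and then run a ``zeros of the positive map'' argument. First I would record that when $c_1c_2c_3=1$ all $c_i$ are strictly positive, and that with $T=\mathrm{diag}(\sqrt{c_1},\sqrt{c_1c_2},1)$ and $S=T^{-1}$ a direct computation using $c_1c_2c_3=1$ gives
\[
S\,\Phi_{[1;c_1,c_2,c_3]}(TXT^{*})\,S^{*}=\Phi_{[1;1,1,1]}(X),\qquad X\in\bM_3,
\]
and that $\Phi_{[1;1,1,1]}$ is exactly the Choi map \eqref{Choi}. Since $X\mapsto TXT^{*}$ and $Y\mapsto SYS^{*}$ are invertible and preserve positive semidefiniteness, the assignment $\Psi\mapsto S\cdot\Psi(T\cdot T^{*})\cdot S^{*}$ is a linear automorphism of the space of maps $\bM_3\to\bM_3$ which carries the cone of positive maps onto itself, hence sends extreme rays to extreme rays; so $\Phi_{[1;c_1,c_2,c_3]}$ is extremal in the cone of positive maps if and only if $\Phi_0:=\Phi_{[1;1,1,1]}$ is. (Positivity and the failure of complete positivity are supplied by Theorem~\ref{TH-ccc}.) After this reduction it is enough to prove that $\Phi_0$ is extremal.

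Next I would set up the standard extremality criterion. Suppose $\Phi_0=\Phi_1+\Phi_2$ with $\Phi_1,\Phi_2$ positive; since positive maps are Hermiticity-preserving it suffices to show $\Phi_1\in\bR\Phi_0$. Put $P_{\Phi_0}=\{(x,y)\in(\bC^3\setminus\{0\})^2:\langle y,\Phi_0(xx^{*})y\rangle=0\}$. For $(x,y)\in P_{\Phi_0}$ the nonnegative numbers $\langle y,\Phi_i(xx^{*})y\rangle$ sum to $0$, hence vanish, and because $\Phi_1(xx^{*})\ge0$ this gives $\Phi_1(xx^{*})y=0$; applying the same to the (also positive) adjoint $\Phi_1^{*}$ together with the identity $\langle x,\Phi_0^{*}(yy^{*})x\rangle=\langle y,\Phi_0(xx^{*})y\rangle$ gives $\Phi_1^{*}(yy^{*})x=0$. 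So I would reduce the theorem to the assertion: the only Hermiticity-preserving $\Psi:\bM_3\to\bM_3$ with $\Psi(xx^{*})y=0$ and $\Psi^{*}(yy^{*})x=0$ for all $(x,y)\in P_{\Phi_0}$ is a real multiple of $\Phi_0$. Granting this, $\Phi_1=t\Phi_0$, and positivity of $\Phi_1$ and of $\Phi_2=(1-t)\Phi_0$ forces $t\in[0,1]$.

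Then I would compute $P_{\Phi_0}$ explicitly. Because $\Phi_0$ is positive, $(x,y)\in P_{\Phi_0}$ iff $\det\Phi_0(xx^{*})=0$ and $y\in\ker\Phi_0(xx^{*})$. With $p=|x_1|^2$, $q=|x_2|^2$, $t=|x_3|^2$ one gets $\det\Phi_0(xx^{*})=p^2q+q^2t+pt^2-3pqt$, which by the AM--GM inequality is $\ge0$ and vanishes precisely when $p=q=t$ (the degenerate cases with some $x_i=0$ checked directly, producing only the ``boundary'' zeros $(e_1,e_3),(e_2,e_1),(e_3,e_2)$); and for $|x_1|=|x_2|=|x_3|=1$ one has $\Phi_0(xx^{*})=3I-xx^{*}$, whose kernel is $\bC x$. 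Hence
\[
P_{\Phi_0}=\{(\lambda x,\mu x):|x_1|=|x_2|=|x_3|=1,\ \lambda,\mu\in\bC\setminus\{0\}\}\cup\{(e_1,e_3),(e_2,e_1),(e_3,e_2)\}.
\]
Writing $M_{jk}=\Psi(E_{jk})$ and substituting $x=(e^{\i\theta_1},e^{\i\theta_2},e^{\i\theta_3})$, $y=x$, into $\Psi(xx^{*})x=0$ and $\Psi^{*}(xx^{*})x=0$, I would then match the coefficients of the $3$-torus characters $e^{\i(\theta_j+\theta_m-\theta_k)}$: the ``monomial'' characters force many entries of each $M_{jk}$ to vanish (in particular whole columns of the off-diagonal blocks, and they confine each diagonal block to the support of $\Phi_0(E_{jj})$), while the ``coupling'' characters, together with the conditions coming from the boundary zeros $(e_i,e_{i-1})$ and the Hermiticity relations $M_{kj}=M_{jk}^{*}$, $M_{jj}=M_{jj}^{*}$, tie all the surviving scalars to a single common real multiple of the corresponding entries of $\Phi_0$. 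This identifies the solution space with $\bR\Phi_0$ and finishes the proof.

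The main obstacle is precisely this last step: actually solving the finite linear system generated by $P_{\Phi_0}$ and verifying that it admits no degrees of freedom beyond $\bR\Phi_0$. The bookkeeping is shortened considerably by the cyclic symmetry $\Phi_0(UXU^{*})=U\Phi_0(X)U^{*}$ with $U$ the permutation $e_1\mapsto e_2\mapsto e_3\mapsto e_1$: this symmetry acts on the solution space, so one only has to analyse one orbit of blocks and then symmetrise. Alternatively one could deduce the conclusion from the theory of exposed positive maps (an atomic map whose zero set is large enough in the above sense is exposed, hence extremal), but the direct route outlined here is self-contained and uses nothing beyond Theorem~\ref{TH-ccc}.
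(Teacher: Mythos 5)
The paper states this result without proof, citing Osaka \cite{Osaka92}, so there is no in-text argument to compare against; your proposal has to stand on its own. The good news is that your reduction step is correct and clean: with $T=\mathrm{diag}(\sqrt{c_1},\sqrt{c_1c_2},1)$ one checks $c_1t_3^2=t_1^2$, $c_2t_1^2=t_2^2$, $c_3t_2^2=t_3^2$ (consistency being exactly $c_1c_2c_3=1$), so $T^{-1}\Phi_{[1;c_1,c_2,c_3]}(TXT^*)T^{-1}=\Phi_{[1;1,1,1]}(X)$, and since $\Psi\mapsto S\Psi(T\cdot T^*)S^*$ is an order-automorphism of the cone of positive maps it preserves extreme rays. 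Your identification of the zero set of the Choi map is also right: $\det\Phi_0(xx^*)=p^2q+q^2t+pt^2-3pqt$ with $p=|x_1|^2$, $q=|x_2|^2$, $t=|x_3|^2$ (I verified this identity), AM--GM gives nonnegativity with equality precisely at $p=q=t$ in the nondegenerate case, $\Phi_0(xx^*)=3I-xx^*$ on the unimodular torus, and the three boundary zeros $(e_1,e_3),(e_2,e_1),(e_3,e_2)$ are the only degenerate ones. The extremality criterion you set up (vanishing of $\Phi_1(xx^*)y$ and $\Phi_1^*(yy^*)x$ at every zero, then a one-dimensionality claim for the solution space) is the standard and correct machinery, and it would in fact yield exposedness, which is stronger than what the theorem asserts.

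The genuine gap is the step you yourself flag as ``the main obstacle'': you never actually solve the linear system imposed by the zero set, and that system \emph{is} the extremality proof --- everything before it is setup that would apply verbatim to any positive map with a computable zero set, including non-extremal ones. The assertion that the character-matching on the torus plus the three boundary zeros plus Hermiticity leaves only $\bR\Phi_0$ is exactly the content that distinguishes the Choi map from, say, $\Phi_{[2,0,2]}$, and it cannot be taken on faith; a priori the system could leave residual freedom (e.g.\ in the diagonal blocks $M_{jj}$, which the torus conditions only confine to the support of $\Phi_0(E_{jj})$ --- a two-dimensional diagonal for each $j$ --- so the boundary zeros and the coupling characters must be shown to cut this down to one global scalar). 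Until that finite but nontrivial computation is carried out (or replaced by an explicit appeal to a proved result such as Choi--Lam's extremality of the biquadratic form, together with the observation that extremality in the cone of positive biquadratic forms pulls back to extremality in the cone of positive maps), the argument is a correct roadmap rather than a proof.
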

	
Let us remind that for a linear map $\Phi:\bM_m\to\bM_n$ one can define its Choi matrix \cite{Choi75}. It is an element $C_\Phi\in\bM_m(\bM_n)$ defined by
\begin{equation}
\label{e:ChoiM}
C_\Phi=(\Phi(E_{ij}))
\end{equation}
where $\{E_{ij}:\,1\leq i,j\leq n\}$ is a system of matrix units in $\bM_m$. The mapping $\Phi\mapsto C_\Phi$ is the so called Choi-Jamio{\l}kowski isomorphism between $L(\bM_m,\bM_n)$ and $\bM_m(\bM_n)$ \cite{Jam74}. One has
\begin{theorem}[\cite{Choi75}]
\label{t:Choi}
A map $\Phi$ is completely positive if and only if its Choi matrix $C_\Phi$ is positive definite.
\end{theorem}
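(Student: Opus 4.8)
The plan is to prove the two implications separately, in both cases exploiting the rank-one positive matrix attached to the ``identity'' block pattern. Fix an orthonormal basis $\{e_1,\dots,e_m\}$ of $\bC^m$ and set $\psi=\sum_{i=1}^m e_i\otimes e_i\in\bC^m\otimes\bC^m$. The block matrix $P=(E_{ij})\in\bM_m(\bM_m)$ whose $(i,j)$ block equals $E_{ij}$ coincides with $|\psi\rangle\langle\psi|=\sum_{i,j}E_{ij}\otimes E_{ij}$, hence is positive. By definition of the amplification, $\Phi_m(P)=(\Phi(E_{ij}))=C_\Phi$. Therefore, if $\Phi$ is completely positive it is in particular $m$-positive, so $\Phi_m(P)=C_\Phi$ is positive; this gives the implication ``completely positive $\Rightarrow C_\Phi\geq 0$''.

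For the converse, suppose $C_\Phi\geq 0$ and choose a decomposition $C_\Phi=\sum_{k}|v_k\rangle\langle v_k|$ with $v_k\in\bC^m\otimes\bC^n$ (e.g.\ from the spectral theorem). Writing $v_k=\sum_{i}e_i\otimes w^{(k)}_i$ with $w^{(k)}_i\in\bC^n$ defines linear operators $V_k\colon\bC^m\to\bC^n$ via $V_k e_i=w^{(k)}_i$, and a direct computation shows that the $(i,j)$ block of $|v_k\rangle\langle v_k|$ is $w^{(k)}_i (w^{(k)}_j)^{*}=V_k E_{ij}V_k^{*}$. Comparing the $(i,j)$ block of $C_\Phi=(\Phi(E_{ij}))$ with that of $\sum_k|v_k\rangle\langle v_k|$ gives $\Phi(E_{ij})=\sum_k V_k E_{ij}V_k^{*}$, and since the matrix units span $\bM_m$ and both sides are linear,
\begin{equation*}
\Phi(X)=\sum_k V_k X V_k^{*},\qquad X\in\bM_m .
\end{equation*}
It then remains to observe that every map of this Kraus form is completely positive: for each $\ell$ one has $\Phi_\ell(Y)=\sum_k(\jed_\ell\otimes V_k)\,Y\,(\jed_\ell\otimes V_k)^{*}$, a sum of conjugations of $Y$; since $Y\mapsto WYW^{*}$ preserves positivity and sums of positive matrices are positive, $\Phi_\ell$ is positive for every $\ell$.

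The only point requiring care is the index bookkeeping in the converse direction — the identification of a vector in $\bC^m\otimes\bC^n$ with an operator $\bC^m\to\bC^n$ and the verification that the block structure of $|v_k\rangle\langle v_k|$ reproduces $V_k E_{ij}V_k^{*}$. Once this is set up correctly the remainder is routine linear algebra, using only that conjugations and finite sums preserve positivity, so I do not anticipate any genuine obstacle beyond organising these elementary manipulations.
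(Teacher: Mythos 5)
Your proof is correct and is the standard argument for Choi's theorem (positivity of $C_\Phi=\Phi_m\bigl((E_{ij})\bigr)$ from $m$-positivity applied to the rank-one matrix $|\psi\rangle\langle\psi|$ with $\psi=\sum_i e_i\otimes e_i$, and conversely a Kraus decomposition extracted from a rank-one decomposition of $C_\Phi\geq 0$). The paper does not prove this statement but simply cites Choi's original article, and your argument is essentially the one given there, so there is nothing further to compare.
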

It turns out also that it is possible to characterise positivity and decomposability of $\Phi$ in terms of the matrix $C_\Phi$. To this end let us recall that $C_\Phi$ can be considered as an element of the tensor product $\bM_m\otimes\bM_n$. An element $\rho\in M_m\otimes M_n$ is called a PPT matrix if both $\rho$ and $(\id_{\bM_m}\otimes\theta_n)(\rho)$ are positive definite.
\begin{theorem}[\cite{HHH96,MajMar01,Sto82}]
\label{t:dec}
Let $\Phi$ be a linear map. 
\begin{enumerate}
	\item[(i)]
	$\Phi$ is positive if and only if for every vectors $\xi\in\bC^m$, $\eta\in\bC^n$
	\begin{equation}
	\label{e:block}
	\<\xi\otimes\eta,C_\Phi\xi\otimes\eta\>\geq 0
	\end{equation}
	for every vectors $\xi\in\bC^m$, $\eta\in\bC^n$.
	\item[(ii)]
	$\Phi$ is decomposable if and only if 
	\begin{equation}
	\Tr(\rho C_\Phi)\geq 0
	\end{equation}
	for every PPT matrix $\rho\in\bM_m\otimes\bM_n$.
\end{enumerate}
\end{theorem}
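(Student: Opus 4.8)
The plan is to deduce both parts from the Choi--Jamio{\l}kowski correspondence $\Phi\mapsto C_\Phi$ of \eqref{e:ChoiM}, together with Theorem~\ref{t:Choi} and elementary convex-cone duality; no genuinely new ingredient is needed. For part (i) I would first reduce positivity to its action on rank-one positive matrices: since every positive $X\in\bM_m$ is a nonnegative combination of projections $|\xi\>\<\xi|$ and $\Phi$ is linear, $\Phi$ is positive if and only if $\<\eta,\Phi(|\xi\>\<\xi|)\,\eta\>\geq 0$ for all $\xi\in\bC^m$ and $\eta\in\bC^n$. Expanding $|\xi\>\<\xi|=\sum_{i,j}\xi_i\overline{\xi_j}\,E_{ij}$, using $C_\Phi=\sum_{i,j}E_{ij}\ot\Phi(E_{ij})$ and $\<\overline{\xi},E_{ij}\,\overline{\xi}\>=\xi_i\overline{\xi_j}$, one checks in one line the identity $\<\,\overline{\xi}\ot\eta,\,C_\Phi(\overline{\xi}\ot\eta)\,\>=\<\eta,\Phi(|\xi\>\<\xi|)\,\eta\>$; since $\overline{\xi}$ ranges over all of $\bC^m$ as $\xi$ does, this is exactly \eqref{e:block}. (The conjugation is merely an artifact of the convention chosen for $C_\Phi$.)

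For part (ii) the argument is a duality computation. I would first record the copositive analogue of Theorem~\ref{t:Choi}: from $C_{\theta_n\circ\Phi}=(\id_{\bM_m}\ot\theta_n)(C_\Phi)$ one gets that $\Phi$ is completely copositive if and only if $(\id\ot\theta_n)(C_\Phi)\geq 0$. Hence, under $\Phi\mapsto C_\Phi$, the cone of decomposable maps corresponds to $\mathcal D:=\{\,P+(\id\ot\theta_n)(Q):P,Q\geq 0\,\}\subseteq\bM_m\ot\bM_n$. Using the self-duality of the positive cone under the trace pairing $(\rho,\sigma)\mapsto\Tr(\rho\sigma)$ and the identity $\Tr\big(A\,(\id\ot\theta_n)(B)\big)=\Tr\big((\id\ot\theta_n)(A)\,B\big)$, one computes that the dual cone $\mathcal D^*$ is precisely $\{\,\rho:\rho\geq 0\text{ and }(\id\ot\theta_n)(\rho)\geq 0\,\}$, i.e. the set of PPT matrices. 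If $\mathcal D$ is closed, the bipolar theorem then gives $\mathcal D=\mathcal D^{**}$, so that $\Phi$ is decomposable $\iff C_\Phi\in\mathcal D\iff \Tr(\rho C_\Phi)\geq 0$ for every PPT matrix $\rho$, which is the assertion.

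The one step that genuinely needs an argument is the closedness of $\mathcal D$, since the sum of two closed convex cones need not be closed. It works here because $\Tr$ is invariant under $\id\ot\theta_n$: in a decomposition $C_\Phi=P+(\id\ot\theta_n)(Q)$ one has $\Tr C_\Phi=\Tr P+\Tr Q$ with $\Tr P,\Tr Q\geq 0$, so along a convergent sequence of decomposable maps the traces of the two pieces stay bounded, each piece then lies in a compact set, and passing to a subsequence yields a limiting decomposition; equivalently, $\mathrm{PSD}\cap\big(-(\id\ot\theta_n)(\mathrm{PSD})\big)=\{0\}$, which is the standard pointedness condition ensuring that the sum is closed. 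Everything else is routine bookkeeping with the Choi matrix, modulo keeping track of the conjugation conventions so that all pairings that occur are real numbers.
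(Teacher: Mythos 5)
This theorem is stated in the paper as a known result, quoted from the cited references; the paper supplies no proof of its own, so there is nothing internal to compare your argument against. That said, your proposal is a correct, self-contained reconstruction of the standard proofs. Part (i) is exactly the usual computation: positivity reduces to rank-one inputs $|\xi\rangle\langle\xi|$ by linearity, and the identity $\langle\overline{\xi}\ot\eta,\,C_\Phi(\overline{\xi}\ot\eta)\rangle=\langle\eta,\Phi(|\xi\rangle\langle\xi|)\eta\rangle$ (conjugation being harmless since $\xi\mapsto\overline{\xi}$ is a bijection of $\bC^m$) turns this into block-positivity \eqref{e:block}. Part (ii) is the standard cone-duality argument: under $\Phi\mapsto C_\Phi$ the decomposable maps correspond to $\mathcal D=\mathrm{PSD}+(\id\ot\theta_n)(\mathrm{PSD})$, the self-duality of the PSD cone in the real space of Hermitian matrices together with $\Tr\bigl(A\,(\id\ot\theta_n)(B)\bigr)=\Tr\bigl((\id\ot\theta_n)(A)\,B\bigr)$ identifies $\mathcal D^*$ with the PPT cone, and the bipolar theorem closes the loop. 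You correctly isolate the only delicate point, the closedness of $\mathcal D$, and your argument for it is sound: either via trace-boundedness of the two summands along a convergent sequence, or via the pointedness condition $\mathrm{PSD}\cap\bigl(-(\id\ot\theta_n)(\mathrm{PSD})\bigr)=\{0\}$, which in finite dimensions does suffice for the sum of two closed convex cones to be closed (the standard normalization/subsequence argument). No gaps.
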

The property (i) in the above theorem is called \textit{block-positivity} \cite{MajMar01}.

\section{Choi-like maps}

In this paper we consider the following generalization of the maps described in Introduction. Let 
\begin{equation}
	A=\left(\begin{array}{ccc}
	a_{1} & b_{1} & c_{1}\\
	c_{2} & a_{2} & b_{2}\\
	b_{3} & c_{3} & a_{3}
	\end{array}\right),\label{matrA}
	\end{equation}
	with $a_{i},b_{j},c_{k}\geq0$, and define $\Phi_{A}:\bM_3\to\bM_3$
	as follows 
	\begin{equation}
	\Phi_{A}(X)=\left(\begin{array}{ccc}
	a_{1}x_{11}+b_{1}x_{22}+c_{1}x_{33} & -x_{12} & -x_{13}\\
	-x_{21} & c_{2}x_{11}+a_{2}x_{22}+b_{2}x_{33} & -x_{23}\\
	-x_{31} & -x_{32} & b_{3}x_{11}+c_{3}x_{22}+a_{3}x_{33}
	\end{array}\right)\ .\label{PhiA}
	\end{equation}
Our aim is to describe some conditions for positivity of $\Phi_A$ as well as for decomposability.

Firstly, let us make the following observation.
\begin{prop}
A map $\Phi_{A}$ is completely positive iff 
	\begin{equation}
	\left(\begin{array}{ccc}
	a_{1} & -1 & -1\\
	-1 & a_{2} & -1\\
	-1 & -1 & a_{3}
	\end{array}\right)\label{e:ACP}
	\end{equation}
is positive definite matrix.
\end{prop}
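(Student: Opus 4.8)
The plan is to invoke Choi's criterion (Theorem~\ref{t:Choi}): $\Phi_A$ is completely positive if and only if its Choi matrix $C_{\Phi_A}=(\Phi_A(E_{ij}))\in\bM_3(\bM_3)$ is positive definite. So the first step is simply to write $C_{\Phi_A}$ down explicitly by evaluating \eqref{PhiA} on the matrix units. A direct computation shows that each diagonal block $\Phi_A(E_{ii})$ is the diagonal matrix whose diagonal is the $i$-th column of $A$, while each off-diagonal block $\Phi_A(E_{ij})$ with $i\neq j$ equals $-E_{ij}$.

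Next I would reorganize the $9\times 9$ matrix $C_{\Phi_A}$ with respect to the orthonormal basis $\{e_i\otimes e_k\}$ of $\bC^3\otimes\bC^3$. The key observation is that the only entry of $C_{\Phi_A}$ linking $e_i\otimes e_k$ with $e_j\otimes e_l$ for $(i,k)\neq(j,l)$ is the $-1$ coming from $\Phi_A(E_{ij})=-E_{ij}$, and this entry is nonzero only when $k=i$ and $l=j$. Consequently the three vectors $e_1\otimes e_1,\ e_2\otimes e_2,\ e_3\otimes e_3$ span a $C_{\Phi_A}$-invariant subspace on which $C_{\Phi_A}$ acts exactly as the matrix \eqref{e:ACP}: its diagonal entries are $(\Phi_A(E_{ii}))_{ii}=a_i$, and its off-diagonal entries are $-1$. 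Each of the remaining six basis vectors $e_i\otimes e_k$ with $k\neq i$ spans a one-dimensional invariant subspace on which $C_{\Phi_A}$ acts as multiplication by the corresponding entry of $A$, i.e.\ by one of $b_1,b_2,b_3,c_1,c_2,c_3$.

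Finally, up to this permutation of the basis, $C_{\Phi_A}$ is the direct sum of the $3\times 3$ matrix \eqref{e:ACP} and six scalars which are $\geq 0$ by the standing assumption $a_i,b_j,c_k\geq 0$. Hence $C_{\Phi_A}$ is positive definite if and only if \eqref{e:ACP} is positive definite, and combining this with Choi's criterion yields the statement.

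There is no substantial obstacle here; the proof is essentially a bookkeeping exercise. The only point requiring care is verifying that the block-diagonalization above is exact — that no basis vector $e_i\otimes e_k$ with $k\neq i$ is coupled to anything else, and that the $3\times 3$ block obtained is precisely \eqref{e:ACP} and not some permuted variant of it.
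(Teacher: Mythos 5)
Your proposal is correct and follows the same route as the paper: compute the Choi matrix $C_{\Phi_A}$ explicitly, apply Theorem~\ref{t:Choi}, and observe that after a permutation of the basis $C_{\Phi_A}$ splits into the $3\times 3$ block \eqref{e:ACP} plus the nonnegative scalars $b_j,c_k$, so positive definiteness of $C_{\Phi_A}$ is equivalent to that of \eqref{e:ACP}. The paper states this decomposition more tersely, but the argument is identical.
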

\begin{proof}
	Observe that the Choi matrix of the map $\Phi_A$ is of the form
	\begin{equation}
C_{\Phi_A}=	\left(
\begin{array}{ccc|ccc|ccc}
a_1 & \cdot & \cdot & \cdot & -1 & \cdot & \cdot & \cdot & -1 \\
\cdot & b_1 & \cdot & \cdot & \cdot & \cdot & \cdot & \cdot & \cdot \\
\cdot & \cdot & c_1 & \cdot & \cdot & \cdot & \cdot & \cdot & \cdot \\ \hline
\cdot & \cdot & \cdot & c_2 & \cdot & \cdot & \cdot & \cdot & \cdot \\
-1 & \cdot & \cdot & \cdot & a_2 & \cdot & \cdot & \cdot & -1 \\
\cdot & \cdot & \cdot & \cdot & \cdot & b_2 & \cdot & \cdot & \cdot \\ \hline
\cdot & \cdot & \cdot & \cdot & \cdot & \cdot & b_3 & \cdot & \cdot \\
\cdot & \cdot & \cdot & \cdot & \cdot & \cdot & \cdot & c_3 & \cdot \\
-1 & \cdot & \cdot & \cdot & -1 & \cdot & \cdot & \cdot & a_3
\end{array}
\right)
\end{equation}
where we used dots instead of zeros. Now we apply Theorem \ref{t:Choi}. Since $b_j,c_k$ are nonnegative, positive definiteness of $C_{\Phi_A}$ is equivalent  to positive definiteness of the submatrix  \eqref{e:ACP}.
\end{proof}
	
Now, let us discuss some necessary conditions for positivity.	
\begin{theorem} 
A necessary condition for positivity of $\Phi_{A}$ is positivity of $\Phi_{[\overline{a},\overline{b},\overline{c}]}$, where
\begin{eqnarray}
		\overline{a}=\frac{1}{3}(a_{1}+a_{2}+a_{3})\ ,\ \overline{b}=\frac{1}{3}(b_{1}+b_{2}+b_{3})\ ,\ \overline{c}=\frac{1}{3}(c_{1}+c_{2}+c_{3}).\label{abc}
\end{eqnarray}
\end{theorem}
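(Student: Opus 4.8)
The plan is to prove this by a symmetrization argument over the cyclic group generated by a permutation of the coordinate axes. Let $W\in\bM_3$ be the unitary permutation matrix determined by $We_1=e_2$, $We_2=e_3$, $We_3=e_1$, and define $\Psi(X)=W^{*}\,\Phi_{A}(WXW^{*})\,W$. Since conjugation by a unitary preserves positive semidefiniteness, $X\ge 0$ forces $WXW^{*}\ge 0$, hence $\Phi_{A}(WXW^{*})\ge 0$, hence $\Psi(X)\ge 0$; so $\Psi$ is positive whenever $\Phi_{A}$ is.

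First I would identify $\Psi$ explicitly. Writing $Y=WXW^{*}$ one has $y_{11}=x_{33}$, $y_{22}=x_{11}$, $y_{33}=x_{22}$ and, more generally, a fixed relabelling of the entries of $X$; substituting this into \eqref{PhiA} and conjugating back by $W^{*}$ is a short bookkeeping computation which yields $\Psi=\Phi_{A'}$ with
\[
A'=\begin{pmatrix} a_{2} & b_{2} & c_{2}\\ c_{3} & a_{3} & b_{3}\\ b_{1} & c_{1} & a_{1}\end{pmatrix}.
\]
The key structural point is that this operation keeps diagonal coefficients diagonal and off-diagonal coefficients off-diagonal, and merely cyclically permutes the triple $(a_{1},a_{2},a_{3})$ among the three diagonal slots and, independently, cyclically permutes $(b_{1},b_{2},b_{3})$ and $(c_{1},c_{2},c_{3})$ among their off-diagonal slots. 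Iterating once more gives $\Phi_{A''}$ with $A''=\begin{pmatrix} a_{3} & b_{3} & c_{3}\\ c_{1} & a_{1} & b_{1}\\ b_{2} & c_{2} & a_{2}\end{pmatrix}$, and a third iteration returns to $\Phi_{A}$.

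Next, since $A\mapsto\Phi_{A}$ is linear and the set of positive maps is a convex cone, if $\Phi_{A}$ is positive then so is $\tfrac13\bigl(\Phi_{A}+\Phi_{A'}+\Phi_{A''}\bigr)=\Phi_{\overline A}$ with $\overline A=\tfrac13(A+A'+A'')$. Reading off the averaged entries and invoking \eqref{abc}, one gets $\overline A=\begin{pmatrix} \overline a & \overline b & \overline c\\ \overline c & \overline a & \overline b\\ \overline b & \overline c & \overline a\end{pmatrix}$; comparing with the definition of $\Phi_{[a,b,c]}$ in the Introduction shows $\Phi_{\overline A}=\Phi_{[\overline a,\overline b,\overline c]}$, which is the assertion.

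The only delicate point is the second step: one must track carefully, through the double conjugation $X\mapsto W^{*}\Phi_{A}(WXW^{*})W$, which of the parameters $a_{i},b_{j},c_{k}$ lands in which slot, since the whole argument hinges on the three iterates $A,A',A''$ averaging precisely to the fully cyclically symmetric matrix $[\overline a,\overline b,\overline c]$. Everything else — unitary invariance of positivity, linearity of $A\mapsto\Phi_{A}$, and convexity of the positive cone — is immediate.
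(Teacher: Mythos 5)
Your proof is correct and is essentially the paper's own argument: the paper symmetrizes $\Phi_A$ over the cyclic shift $S e_i = e_{i+1}$ via $\overline{\Phi}_A(X)=\tfrac13\big(\Phi_A(X)+S\Phi_A(S^*XS)S^*+S^*\Phi_A(SXS^*)S\big)$ and identifies the result with $\Phi_{[\overline a,\overline b,\overline c]}$, which is exactly your averaging of $\Phi_A,\Phi_{A'},\Phi_{A''}$ over the group generated by $W$. The bookkeeping you flag (that conjugation cyclically permutes the $a_i$, $b_j$, $c_k$ within their respective slots) checks out.
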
 
\begin{proof} 
	Let $\Phi_{A}$ be a positive map and consider
	a linear map 
	\begin{equation}
\overline{\Phi}_{A}(X)=\frac{1}{3}\Big(\Phi_{A}(X)+S\Phi_{A}(S^{*}XS)S^{*}+S^{*}\Phi_{A}(SXS^{*})S\Big)\ ,\label{}
	\end{equation}
	where $S$ denotes a unitary shift $Se_{i}=e_{i+1}$. It is clear
	that $\overline{\Phi}_{A}$ is positive. Moreover, $\overline{\Phi}_{A}=\Phi[\overline{a},\overline{b},\overline{c}]$
	with $\overline{a},\overline{b},\overline{c}$ defined in (\ref{abc}).
\end{proof}
	
Let us introduce the following notation
\begin{eqnarray}
a_*=(a_{1}a_{2}a_{3})^{1/3}\ ,\ b_*=(b_{1}b_{2}b_{3})^{1/3}\ ,\ c_*=(c_{1}c_{2}c_{3})^{1/3}.\label{gabc}
\end{eqnarray}
To find sufficient conditions consider a positive map $\Phi[a,b,c]$ and define 
	\begin{equation}
	\Phi_{[a,b,c]}^{V}(X):=V^{1/2}\Phi_{[a,b,c]}\left(V^{-1/2}XV^{-1/2}\right)V^{1/2},\label{}
	\end{equation}
	where $V=\sum_{i=1}^{3}p_{i}E_{ii}$ with $p_{i}>0$. It is clear
	that $\Phi_{[a,b,c]}^{V}$ is positive. Moreover, one easily finds
	that $\Phi_{[a,b,c]}^{V}=\Phi_{A}$ with 
	\begin{equation}
	a_{i}=a_{*}\ ,\ b_{i}=\frac{p_{i+1}}{p_{i}}b\ ,\ c_{i}=\frac{p_{i+2}}{p_{i}}c\ ,\ \ \ i=1,2,3\ ,\label{}
	\end{equation}
	that is, $A=V^{-1}A_{[a,b,c]}V$. Note that $b_*=b$,
	$c_*=c$, and $b_{i}c_{i+1}=bc$. Hence, one arrives
	at the following
	\begin{theorem} 
		Assume that there exist
		$a,b,c$ and $V$ such that $\Phi_{[a,b,c]}$ is a positive map and
		the matrix 
		\begin{equation}
		M=A-V^{-1}A_{[a,b,c]}V,\label{M}
		\end{equation}
		has non-negative elements. Then
		$\Phi_{A}$ is a positive map.  
	\end{theorem}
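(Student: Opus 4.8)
The plan is to realize $\Phi_A$ as a sum of the already-constructed positive map $\Phi_{[a,b,c]}^{V}$ and a completely positive ``correction'' map. The crucial structural observation is that in the definition \eqref{PhiA} the off-diagonal part, i.e.\ the assignment $(x_{ij})\mapsto -x_{ij}$ for $i\neq j$, does \emph{not} depend on the coefficient matrix $A$; only the three diagonal entries do, and they depend on $A$ linearly. Concretely, for a matrix $B=(b_{ij})$ with non-negative entries put
\[
D_B(X)=\sum_{i=1}^{3}\Big(\sum_{j=1}^{3} b_{ij}x_{jj}\Big)E_{ii},
\]
so that $\Phi_A=D_A+N$, where $N$ is the ($A$-independent) off-diagonal piece. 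Hence $\Phi_A-\Phi_{A'}=D_{A-A'}$ for any two coefficient matrices $A,A'$.

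First I would recall, from the discussion preceding the statement, that $\Phi_{[a,b,c]}^{V}=\Phi_{A'}$ with $A'=V^{-1}A_{[a,b,c]}V$ (its entries being $a_i=a_*$, $b_i=\tfrac{p_{i+1}}{p_i}b$, $c_i=\tfrac{p_{i+2}}{p_i}c$), and that $\Phi_{[a,b,c]}^{V}$ is positive because $X\mapsto V^{1/2}XV^{1/2}$ and $X\mapsto V^{-1/2}XV^{-1/2}$ are congruences, which preserve positivity, and $\Phi_{[a,b,c]}$ is assumed positive. By the previous paragraph,
\[
\Phi_A=\Phi_{A'}+D_{A-A'}=\Phi_{[a,b,c]}^{V}+D_{M},
\]
where $M=A-V^{-1}A_{[a,b,c]}V$ is precisely the matrix in the statement.

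Next I would check that $D_M$ is completely positive whenever $M$ has non-negative entries. Using the elementary identity $E_{ij}XE_{ji}=x_{jj}E_{ii}$ one rewrites
\[
D_M(X)=\sum_{i,j=1}^{3} M_{ij}\,E_{ij}XE_{ji}
      =\sum_{i,j=1}^{3}\big(\sqrt{M_{ij}}\,E_{ij}\big)X\big(\sqrt{M_{ij}}\,E_{ij}\big)^{*},
\]
which is a Kraus decomposition with operators $\sqrt{M_{ij}}\,E_{ij}$; hence $D_M$ is completely positive, in particular positive. (Alternatively, by Theorem~\ref{t:Choi} the Choi matrix of $D_M$ is the block-diagonal positive matrix $\sum_{i,j}M_{ij}E_{ii}\otimes E_{jj}$.) Since the sum of two positive maps is positive, $\Phi_A=\Phi_{[a,b,c]}^{V}+D_M$ is positive, which is the assertion.

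There is no serious obstacle here: the whole argument rests on the remark that $\Phi_A$ is affine in $A$ with $A$-independent off-diagonal part, so translating $A$ by an entrywise non-negative matrix is implemented by adding a diagonal completely positive map. The only computations involved are the verification of $E_{ij}XE_{ji}=x_{jj}E_{ii}$ and of the entries of $V^{-1}A_{[a,b,c]}V$ already recorded in the text, both routine.
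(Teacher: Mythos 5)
Your proposal is correct and follows essentially the same route the paper intends: the paper's ``proof'' is the discussion immediately preceding the statement, where $\Phi_{[a,b,c]}^{V}$ is shown to be positive and to equal $\Phi_{A'}$ with $A'=V^{-1}A_{[a,b,c]}V$, so that $\Phi_A=\Phi_{A'}+D_M$ with $D_M$ a diagonal map with non-negative coefficients, hence completely positive. You have merely made explicit (via the Kraus decomposition with operators $\sqrt{M_{ij}}\,E_{ij}$) the step the paper leaves implicit, and the details check out.
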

	
	\begin{corollary} 
		Assume that there exist $a,b,c\geq0$
		such that 
		\begin{enumerate}
			\item 
		$\Phi_{[a,b,c]}$ defines a positive map, 
\item		
		$\min\{a_{1},a_{2},a_{3}\}\geq a$, \quad
		$b_*\geq b$, \quad $c_*\geq c$, 
\item 		$b_{i}c_{i+1}\geq bc$ for $i=1,2,3$.
\end{enumerate}
Then  $\Phi_{A}$ is a positive map.
  \end{corollary}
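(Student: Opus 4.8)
The plan is to reduce the corollary to the preceding theorem by producing an explicit diagonal matrix $V=\sum_{i=1}^{3}p_{i}E_{ii}$, $p_{i}>0$, for which every entry of $M=A-V^{-1}A_{[a,b,c]}V$ is non-negative. First I would reparametrize: writing $r_{i}=p_{i+1}/p_{i}$ with indices read mod $3$, one has $r_{1}r_{2}r_{3}=1$, and conversely every positive triple with product $1$ is realized by $p_{1}=1$, $p_{2}=r_{1}$, $p_{3}=r_{1}r_{2}$. Conjugation by the diagonal $V$ fixes the diagonal of $A_{[a,b,c]}$ and multiplies its $(i,j)$ entry by $p_{j}/p_{i}$; carrying this out and using $r_{i}r_{i+1}=1/r_{i+2}$, one finds that $M$ has entries $a_{i}-a$ on the diagonal, $b_{i}-r_{i}b$ in the three ``$b$-slots'' $(1,2),(2,3),(3,1)$, and $c_{i}-c/r_{i-1}$ in the ``$c$-slots'' $(2,1),(3,2),(1,3)$.

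Next I would read off when these are all non-negative. The diagonal conditions $a_{i}\ge a$ are exactly part of hypothesis (2). Assuming for now $b,c>0$ (note that then hypothesis (2) already forces every $b_{i}>0$ and every $c_{i}>0$, since otherwise $b_{*}$ or $c_{*}$ would vanish), the remaining conditions say precisely
\[
\frac{c}{c_{i+1}}\ \le\ r_{i}\ \le\ \frac{b_{i}}{b}\qquad(i=1,2,3),\quad c_{4}:=c_{1}.
\]
Each of these three intervals is non-empty exactly because $b_{i}c_{i+1}\ge bc$, which is hypothesis (3). It then remains to choose $r_{i}$ inside its interval with $r_{1}r_{2}r_{3}=1$; here hypothesis (2) enters again, since the product of the left endpoints is $c^{3}/(c_{1}c_{2}c_{3})=(c/c_{*})^{3}\le 1$ while the product of the right endpoints is $(b_{*}/b)^{3}\ge 1$. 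Parametrizing the $i$-th interval as $[\ell_{i},u_{i}]$ and setting $g(t)=\prod_{i=1}^{3}\ell_{i}^{1-t}u_{i}^{t}$ for $t\in[0,1]$, the function $g$ is continuous with $g(0)\le 1\le g(1)$, so by the intermediate value theorem $g(t_{*})=1$ for some $t_{*}$; taking $r_{i}=\ell_{i}^{1-t_{*}}u_{i}^{t_{*}}$ yields admissible ratios with product $1$, hence the required $V$, and the preceding theorem gives positivity of $\Phi_{A}$.

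Finally I would dispose of the degenerate cases. If $b=c=0$ one takes $V=\id$, so $M=A-aI$ has entries $a_{i}-a\ge 0$ together with the non-negative off-diagonal entries of $A$, and we are done. If $c=0$ and $b>0$, only the upper bounds $r_{i}\le b_{i}/b$ survive, and the choice $r_{i}=b_{i}/b_{*}$ works because $\prod_{i}b_{i}/b_{*}=1$ and $b_{*}\ge b$; dually, if $b=0$ and $c>0$ the choice $r_{i}=c_{*}/c_{i+1}$ works because $c_{*}\ge c$. The only genuine content beyond routine bookkeeping is the feasibility step --- non-emptiness of the three intervals and the existence of an admissible point with product exactly $1$ --- which is precisely where hypotheses (2) and (3) are consumed; I expect the cyclic index juggling in passing from ``$M\ge 0$ entrywise'' to the displayed two-sided inequalities to be the most error-prone part of the write-up.
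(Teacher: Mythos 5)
Your argument is correct and follows exactly the route the paper intends: reduce to the preceding theorem by exhibiting a diagonal $V$ with $M=A-V^{-1}A_{[a,b,c]}V\geq 0$ entrywise, which is where the identities $b_*=b$, $c_*=c$, $b_ic_{i+1}=bc$ for the conjugated map come from. The paper states the corollary without proof, and your feasibility argument (non-empty intervals $c/c_{i+1}\leq r_i\leq b_i/b$ from hypothesis (3), plus the intermediate-value interpolation to force $r_1r_2r_3=1$ using hypothesis (2), with the $b=0$ or $c=0$ cases handled separately) correctly supplies the details the authors leave implicit.
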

	
	\begin{remark} If $b_{1}b_{2}b_{3}=0$, then a sufficient condition
		reduces to 
		\begin{enumerate}
			\item $a_{1},a_{2},a_{3}\geq a$,
			\item $c_{1}c_{2}c_{3}\geq(2-a)^{3}$,
		\end{enumerate}
		with $1\leq a<2$. Interestingly, if $a_{1}=a_{2}=a_{3}=a$, then
		$a\geq1$ and $c_{1}c_{2}c_{3}\geq(2-a)^{3}$ are necessary and sufficient
		\cite{Kye92}. Indeed, if $\Phi_{A}$ is positive and $c_{1}c_{2}c_{3}=c^{3}$
		let us define $V$ by taking 
		\begin{equation}
		c_{1}=\frac{p_{3}}{p_{1}}c\ ,\ c_{2}=\frac{p_{1}}{p_{2}}c\ ,\ c_{3}=\frac{p_{2}}{p_{3}}c\ \ \label{}
		\end{equation}
		Note the map $\Phi_{[a,b,c]}$ has to be positive and hence $a+c\geq0$.
		Hence $a+\sqrt[3]{c_{1}c_{2}c_{3}}\geq2$ which implies $c_{1}c_{2}c_{3}\geq(2-a)^{3}$.
	\end{remark}
	
	We end this section with the following remark.
\begin{remark}
\label{u:1}
Let us observe that condition (3) in Theorem \ref{TH-korea}	can equivalently written as
\begin{equation}
a+\sqrt{bc}\geq 1.
\end{equation}
On the other hand, the condition (2) of Theorem \ref{TH-ccc} can be rewritten as
\begin{equation}
a+c_*\geq 2.
\end{equation}
Let us also remind that for a matrix $$A=\left(\begin{array}{cc}a_1 & b_1 \\b_2 & a_2\end{array}\right)$$ one can define a map $\Phi_A:\bM_2\to\bM_2$ similarly to \eqref{PhiA}. It was shown by Kossakowski (see \cite[Example 7.1]{ChruSarb2014})
that $\Phi_A$ is positive if and only if $a_{i}\geq0$, $b_{i}\geq0$
and 
\begin{equation}
\sqrt{a_{1}a_{2}}+\sqrt{b_{1}b_{2}}\geq1 \label{aibi2}
\end{equation}
Having all these observations in mind one can conjecture that positivity of the map \eqref{PhiA} is related to the following set of conditions:
\begin{eqnarray}
& \sqrt{a_ia_{i+1}}+\sqrt{b_ic_{i+1}}\geq 1& \label{c:2} \\
& a_*+b_*+c_*\geq 2 &  \label{c:3} 
\end{eqnarray}
In next sections we shall discuss these conditions.
\end{remark}
\section{Condition \eqref{c:2}}
Now we consider further generalisation.	
	Let $A$=$\left(a_{ij}\right)_{i,j=1}^{n}$ be a matrix with nonnegative coefficients. 
	Let us define a map  $\Phi_A:\bM_n\to\bM_n$ by
	\begin{equation}
	\Phi_{A}\left(X\right)=\Delta_{A}\left(X\right)-X,
\label{PhiAn}
	\end{equation}
	where
	\begin{equation}
	\Delta_{A}\left(X\right)=\textrm{diag}\left(\sum_{j}a_{1j}'x_{jj},\sum_{j}a_{2j}'x_{jj},\ldots,\sum_{j}a_{nj}'x_{jj}\right).
	\end{equation}
In the above formula $a_{ij}'=a_{ij}+\delta_{ij}$, where $\delta_{ij}$ stands for the Kronecker delta.	
	
	%
	%
	%
	%
	%
	%

	Observe that positivity of $\Phi_A$ is equivalent to the inequality
$\left\langle \eta,\Delta_{A}\left(\zeta\zeta^{*}\right)\eta\right\rangle \geq\left\langle \eta,\zeta\zeta^{*}\eta\right\rangle$ 
for every $\zeta,\eta\in\mathbb{C}^n$.	
	One can easily show that the above is equivalent to
	\begin{equation}
	\sum_{ij}a_{ij}'p_{i}^{2}q_{j}^{2}\geq\left(\sum_{i}p_{i}q_{i}\right)^{2} \label{eq:posit},\qquad p_{i},q_{i}\geq0
	\end{equation}

In the following main result of this section we discuss the role of the  condition \eqref{c:2}. 
\begin{theorem}
Let $\Phi_A$ be given by \eqref{PhiAn}. 
If the map $\Phi_A$ is positive, then 
\begin{equation}
\label{local}
\sqrt{a_{ii}a_{jj}}+\sqrt{a_{ij}a_{ji}}\geq1,\qquad i\neq j
\end{equation}
\end{theorem}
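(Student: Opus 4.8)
The plan is to argue directly from the scalar criterion \eqref{eq:posit}, feeding it test vectors supported on a single pair of coordinates. Fix $i\neq j$ and, in \eqref{eq:posit}, set $p_k=q_k=0$ for all $k\notin\{i,j\}$; after cancelling the terms $p_i^2q_i^2+p_j^2q_j^2$ produced by the Kronecker part of $a_{kl}'$, positivity of $\Phi_A$ becomes
\[
a_{ii}p_i^2q_i^2+a_{ij}p_i^2q_j^2+a_{ji}p_j^2q_i^2+a_{jj}p_j^2q_j^2\ \geq\ 2\,p_iq_ip_jq_j ,\qquad p_i,q_i,p_j,q_j\geq0 .
\]

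The key step is the substitution $u=p_iq_i$, $v=p_jq_j$, $w=p_iq_j$, $z=p_jq_i$. These four nonnegative numbers satisfy $uv=wz$, and conversely any nonnegative quadruple with $uv=wz$ comes from some choice of the $p$'s and $q$'s (e.g. $q_j=1,\ p_i=w,\ q_i=u/w,\ p_j=v$ when $w>0$; the case $w=0$ forces $uv=0$ and is trivial). So, for every $u,v\geq0$, I would minimise $a_{ij}w^2+a_{ji}z^2$ over $w,z\geq0$ with $wz=uv$: by the arithmetic--geometric mean inequality this infimum equals $2\sqrt{a_{ij}a_{ji}}\,uv$, the formula staying valid (with value $0$) when $a_{ij}a_{ji}=0$ or $uv=0$. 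Consequently positivity of $\Phi_A$ forces
\[
a_{ii}u^2+a_{jj}v^2\ \geq\ 2\bigl(1-\sqrt{a_{ij}a_{ji}}\bigr)uv ,\qquad u,v\geq0 .
\]

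From here \eqref{local} is immediate when $\sqrt{a_{ij}a_{ji}}\geq1$. Otherwise set $c=1-\sqrt{a_{ij}a_{ji}}>0$ and read the last inequality, for fixed $v$, as a quadratic in $u$ that is nonnegative on $u\geq0$; evaluating at its (nonnegative) vertex $u=cv/a_{ii}$ gives $a_{ii}a_{jj}\geq c^2$, that is $\sqrt{a_{ii}a_{jj}}+\sqrt{a_{ij}a_{ji}}\geq1$. This uses $a_{ii}>0$; but $a_{ii}=0$ (or $a_{jj}=0$) cannot occur in this branch, since then letting $u\to\infty$ would contradict positivity — and it is anyway consistent, as such a case would also violate $\sqrt{a_{ij}a_{ji}}<1$.

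I expect the only real obstacle to be the uniform bookkeeping of the degenerate configurations in which some of $a_{ii},a_{jj},a_{ij},a_{ji}$ vanish: one must make sure the optimising parameters remain admissible and that the quantities compared are genuine minima rather than mere infima. The rest is two applications of AM--GM — one to eliminate $w,z$, one to compare $u$ with $v$. As a sanity check, for two-dimensional maps this recovers exactly Kossakowski's condition \eqref{aibi2}, and for $\Phi_{[a,b,c]}$ it recovers the necessary condition $a+\sqrt{bc}\geq1$ appearing in Theorem \ref{TH-korea}.
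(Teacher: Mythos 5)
Your proof is correct and follows essentially the same route as the paper: both restrict the scalar positivity criterion \eqref{eq:posit} to vectors supported on the pair $\{i,j\}$ and then extract the sharp constant from the resulting four-parameter quadratic inequality. The only difference is in how that optimization is carried out — the paper completes squares and exhibits an explicit minimizing choice of $p_j,q_i$ (which involves dividing by $a_{ji}$ and $a_{jj}$ and so tacitly assumes these are nonzero), whereas your two-stage AM--GM argument via the substitution $u=p_iq_i$, $v=p_jq_j$, $w=p_iq_j$, $z=p_jq_i$ with $uv=wz$ handles the degenerate cases $a_{ij}a_{ji}=0$ or $a_{ii}a_{jj}=0$ cleanly, which is a minor but genuine improvement in rigor.
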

\begin{proof}
	Let $i\neq j$ be fixed. Observe that 
		\begin{eqnarray}
		\lefteqn{\sum_{kl}a_{kl}'p_{k}^{2}q_{l}^{2}-\left(\sum_{k}p_{k}q_{k}\right)^{2}=}\label{st}\\ 
		& = &
		\sum_{kl}a_{kl}'p_{k}^{2}q_{l}^{2}-\sum_{kl}p_{k}p_{l}q_{k}q_{l}  \\
		&=&
		\sum_{k} a_{kk} p_{k}^{2}q_{k}^{2}+\sum_{k<l}\left(a_{kl}p_{k}^{2}q_{l}^{2}+a_{lk}p_{l}^{2}q_{k}^{2}\right)-2\sum_{k<l}p_{k}p_{l}q_{k}q_{l} \\
		& = & 
		\sum_{k} a_{kk} p_{k}^{2}q_{k}^{2} + \sum_{k<l}(\sqrt{a_{kl}}p_kq_l-\sqrt{a_{lk}}p_lq_k)^2+2 \sum_{k<l}(\sqrt{a_{kl}a_{lk}}-1)p_kp_lq_kq_l
		\label{eq:posit2} 
		\end{eqnarray}
		Consider the case $p_k=0$ and $q_k=0$ for every $k\not\in\{i,j\}$. Then, the above expression is equal to
		\begin{eqnarray}
		\lefteqn{a_{ii}p_i^2q_i^2 + a_{jj}p_j^2q_j^2 +(\sqrt{a_{ij}}p_iq_j-\sqrt{a_{ji}}p_jq_i)^2+2(\sqrt{a_{ij}a_{ji}}-1)p_ip_jq_iq_j=} \nonumber\\
		&=&
		(\sqrt{a_{ii}}p_iq_i-\sqrt{a_{jj}}p_jq_j)^2 + (\sqrt{a_{ij}}p_iq_j-\sqrt{a_{ji}}p_jq_i)^2 \label{l1} \\ && {}+2\left(\sqrt{a_{ii}a_{jj}}+\sqrt{a_{ij}a_{ji}}-1\right)p_ip_jq_iq_j \label{l2}
		\end{eqnarray}
		Now, take
		$$
		p_i=q_j=1,\qquad p_j=\left(\dfrac{a_{ij}a_{ii}}{a_{ji}a_{jj}}\right)^{1/4}, \qquad q_i=\left(\dfrac{a_{ij}a_{jj}}{a_{ji}a_{ii}}\right)^{1/4}
		$$
		Then, the expression from lines \eqref{l1} and \eqref{l2} reduces to
		$$2\left(\sqrt{a_{ii}a_{jj}}+\sqrt{a_{ij}a_{ji}}-1\right)\left(\dfrac{a_{ij}}{a_{ji}}\right)^{1/2}$$
		It follows from the assumption that the expression \eqref{st} is nonnegative for every choice of $p_k$, $q_k$. Thus we arrive at the inequality
		$$\sqrt{a_{ii}a_{jj}}+\sqrt{a_{ij}a_{ji}}-1\geq 0$$
		which is equivalent to \eqref{local}.
\end{proof}

Having in mind Theorem \ref{TH-korea} one cannot expect that a converse theorem is also true. However, it turns out that a slight modification of \eqref{local} gives a sufficient condition for positivity and even decomposability. 
\begin{theorem}
If
\begin{equation}
\label{nlocal}
\dfrac{1}{n-1}\sqrt{a_{ii}a_{jj}}+\sqrt{a_{ij}a_{ji}}\geq1,\qquad i\neq j
\end{equation}
then the map $\Phi_A$ is positive and decomposable.
\end{theorem}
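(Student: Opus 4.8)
The plan is to exhibit an explicit decomposition $\Phi_A = \Phi_1 + \Phi_2$ with $\Phi_1$ completely positive and $\Phi_2$ completely copositive, which simultaneously gives positivity (by definition) and decomposability. The natural building block is the original Choi-type construction: for a single off-diagonal pair $\{i,j\}$, the $2\times 2$ Kossakowski-type map is positive precisely when $\sqrt{a_{ii}a_{jj}}+\sqrt{a_{ij}a_{ji}}\ge 1$ (cf.\ \eqref{aibi2}), and under the factor-$\tfrac1{n-1}$ strengthening \eqref{nlocal} we have room to ``spend'' each diagonal weight $a_{ii}$ across the $n-1$ partners $j\ne i$. Concretely, I would write, for each ordered pair $(i,j)$ with $i\neq j$, a completely copositive contribution that handles the $-x_{ij}$ entry of $\Phi_A(X)$ together with a suitable share of the diagonal, and a completely positive contribution absorbing any leftover diagonal weight, then sum.

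First I would reduce, via the criterion \eqref{eq:posit} together with the algebraic identity \eqref{eq:posit2}, the problem to producing a pointwise nonnegative lower bound for
\[
\sum_{k} a_{kk} p_k^2 q_k^2 + \sum_{k<l}\bigl(\sqrt{a_{kl}}\,p_kq_l-\sqrt{a_{lk}}\,p_lq_k\bigr)^2 + 2\sum_{k<l}\bigl(\sqrt{a_{kl}a_{lk}}-1\bigr)p_kp_lq_kq_l .
\]
The only possibly-negative terms are the last ones, with coefficient $\sqrt{a_{kl}a_{lk}}-1$, which under \eqref{nlocal} is at least $-\tfrac1{n-1}\sqrt{a_{kk}a_{ll}}$. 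So it suffices to show
\[
\sum_k a_{kk}p_k^2q_k^2 \;\ge\; \frac{2}{n-1}\sum_{k<l}\sqrt{a_{kk}a_{ll}}\,p_kp_lq_kq_l ,
\]
i.e.\ that $\sum_k t_k^2 \ge \tfrac{2}{n-1}\sum_{k<l} t_k t_l$ where $t_k=\sqrt{a_{kk}}\,p_kq_k \ge 0$. This is immediate: $\sum_{k<l} t_kt_l \le \tfrac{n-1}{2}\sum_k t_k^2$ by AM–GM applied pairwise (or by noting $(\sum t_k)^2 \le n\sum t_k^2$ and $2\sum_{k<l}t_kt_l = (\sum t_k)^2 - \sum t_k^2 \le (n-1)\sum t_k^2$). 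This establishes positivity directly from \eqref{eq:posit}.

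For decomposability I would make the above bookkeeping structural rather than purely inequality-based: split $a_{kk} = \sum_{l\ne k} \alpha_{kl}$ with $\alpha_{kl}\ge 0$ chosen so that for each pair $\{k,l\}$ one has $\tfrac12(\alpha_{kl}+\alpha_{lk}) \ge \tfrac1{n-1}\sqrt{a_{kk}a_{ll}}$ — e.g.\ $\alpha_{kl}$ proportional to $a_{ll}$, giving $\alpha_{kl} = a_{kk}a_{ll}/\sum_{m\ne k} a_{mm}$, which by Cauchy–Schwarz satisfies the pair condition. Then group the terms of $\Phi_A$ pairwise: the $(k,l)$-block, carrying diagonal weights $\alpha_{kl},\alpha_{lk}$ on the $k,l$ slots and the entries $-x_{kl},-x_{lk}$, is exactly a (compressed) $2\times 2$ Kossakowski map, which since it is a positive $\bM_2\to\bM_2$ map is decomposable by St\o rmer's theorem; dilating back to $\bM_n$ and summing over all pairs $\{k,l\}$ exhibits $\Phi_A$ as a sum of decomposable maps, hence decomposable. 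The main obstacle is verifying that the chosen split $\alpha_{kl}$ really yields a \emph{positive} $2\times2$ block after accounting for the constants $\sqrt{a_{kl}a_{lk}}\ge 1-\tfrac1{n-1}\sqrt{a_{kk}a_{ll}}$ from \eqref{nlocal}; this amounts to checking $\sqrt{\alpha_{kl}\alpha_{lk}} + \sqrt{a_{kl}a_{lk}} \ge 1$, which follows from $\sqrt{\alpha_{kl}\alpha_{lk}} \ge \tfrac1{n-1}\sqrt{a_{kk}a_{ll}}$ (again Cauchy–Schwarz on the denominators) combined with \eqref{nlocal}. I would present the positivity argument in full and the decomposability argument through this explicit pairwise dilation.
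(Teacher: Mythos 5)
Your positivity argument is correct, and it is essentially the paper's own: the paper distributes each $a_{ii}p_i^2q_i^2$ over the $n-1$ pairs containing $i$ and completes squares, which is exactly your pairwise AM--GM bound $2\sum_{k<l}t_kt_l\le (n-1)\sum_k t_k^2$ for $t_k=\sqrt{a_{kk}}\,p_kq_k$, applied after replacing $\sqrt{a_{kl}a_{lk}}-1$ by $-\tfrac{1}{n-1}\sqrt{a_{kk}a_{ll}}$ via \eqref{nlocal}. No issue there.

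The decomposability part has a genuine gap. Your strategy (split $a_{kk}=\sum_{l\ne k}\alpha_{kl}$, write $\Phi_A$ as a sum of compressed $2\times2$ Kossakowski blocks, and invoke St{\o}rmer's theorem on each block) is a legitimate and genuinely different route from the paper's, but the split you propose does not do the job. For the $\{k,l\}$ block to be positive you need, as you correctly state at the end, the geometric-mean condition $\sqrt{\alpha_{kl}\alpha_{lk}}+\sqrt{a_{kl}a_{lk}}\ge1$; your claim that $\alpha_{kl}=a_{kk}a_{ll}/\sum_{m\ne k}a_{mm}$ satisfies $\sqrt{\alpha_{kl}\alpha_{lk}}\ge\tfrac1{n-1}\sqrt{a_{kk}a_{ll}}$ is false. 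It would require $(n-1)\sqrt{a_{kk}a_{ll}}\ge\big(\sum_{m\ne k}a_{mm}\big)^{1/2}\big(\sum_{m\ne l}a_{mm}\big)^{1/2}$, which is not a Cauchy--Schwarz inequality and fails whenever a third diagonal entry dominates: for $n=3$, $a_{11}=a_{22}=1$, $a_{33}=100$, $a_{12}=a_{21}=\tfrac12$ (and the other off-diagonal products zero, so \eqref{nlocal} holds), one gets $\sqrt{\alpha_{12}\alpha_{21}}=\tfrac1{101}$ versus $\tfrac12\sqrt{a_{11}a_{22}}=\tfrac12$, and the $\{1,2\}$ block satisfies $\tfrac1{101}+\tfrac12<1$, i.e.\ it is not a positive map. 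The approach can be repaired by taking instead $\alpha_{kl}=\max\{1-\sqrt{a_{kl}a_{lk}},0\}\cdot\sqrt{a_{kk}/a_{ll}}$, so that $\sqrt{\alpha_{kl}\alpha_{lk}}=\max\{1-\sqrt{a_{kl}a_{lk}},0\}$ exactly, while \eqref{nlocal} gives $\sum_{l\ne k}\alpha_{kl}\le a_{kk}$ and the leftover diagonal weight goes into a completely positive map; but as written the key verification fails. For comparison, the paper proves decomposability non-constructively, by checking $\Tr(\rho\, C_{\Phi_A})\ge0$ for every PPT $\rho$ using the two PPT inequalities on $\rho$ and $\rho^\Gamma$ together with AM--GM; your (corrected) route is longer but would produce the decomposition explicitly.
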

\begin{proof}	
		Let us observe that
		\begin{eqnarray}
		\lefteqn{\sum_{i} a_{ii} p_{i}^{2}q_{i}^{2}=}\\
		&=&\frac{1}{n-1} \sum_{i<j}\Big( 
		a_{ii}p_{i}^{2}q_{i}^{2}+a_{jj}p_{j}^{2}q_{j}^{2}
		\Big)
		\\
		&=&\sum_{i<j}\left(\sqrt{\frac{a_{ii}}{n-1}}p_{i}q_{i}-\sqrt{\frac{a_{jj}}{n-1}}p_{j}q_{j}\right)^{2}+2\sum_{i<j}\dfrac{\sqrt{a_{ii}a_{jj}}}{n-1}p_{i}p_{j}q_{i}q_{j}\label{yy}
		\end{eqnarray}
		According to the lines \eqref{st} - \eqref{eq:posit2} and \eqref{yy} one has
		\begin{eqnarray}
		\lefteqn{\sum_{ij}a_{ij}'p_{i}^{2}q_{j}^{2}-\left(\sum_{i}p_{i}q_{i}\right)^{2}=}\\ 
		& = & 
		\sum_{i}a_{ii}p_{i}^{2}q_{i}^{2} + \sum_{i<j}(\sqrt{a_{ij}}p_iq_j-\sqrt{a_{ji}}p_jq_i)^2+2 \sum_{i<j}(\sqrt{a_{ij}a_{ji}}-1)p_ip_jq_iq_j\\
		&=&
		\sum_{i<j}\left(\sqrt{\frac{a_{ii}}{n-1}}p_{i}q_{i}-\sqrt{\frac{a_{jj}}{n-1}}p_{j}q_{j}\right)^{2} + \sum_{i<j}(\sqrt{a_{ij}}p_iq_j-\sqrt{a_{ji}}p_jq_i)^2 \\
		&& {}+ 2\sum_{i<j}\left(\dfrac{\sqrt{a_{ii}a_{jj}}}{n-1}+\sqrt{a_{ij}a_{ji}}-1\right)p_ip_jq_iq_j
		\end{eqnarray}
		It follows from \eqref{nlocal} that the above expression is nonnegative for every nonnegative $p_i$ and $q_i$, so $\Phi_A$ is positive.

Now, we will show that $\Phi_A$ is decomposable. Let $\rho\in \bM_n(\bM_n)$ be a PPT matrix. We have $\rho=(\rho_{ij})$ where $\rho_{ij}\in\bM_n$. For $i\neq j$, let $r_{ij}$ be the $(i,j)$-coefficient of the matrix $\rho_{ij}$. For any $i,j$, let $\alpha_{ij}$ denote the $j$-th diagonal term in the matrix $\rho_{ii}$. Thus the matrix $\rho$ looks like
$$
\rho=\left(\begin{array}{ccccc|ccccc|c|ccccc}
\alpha_{11} & * & * &\cdots & * & * & r_{12} & * & \cdots & * & \cdots & * & * &* & \cdots & r_{1n} \\
* & \alpha_{12} & * &\cdots & * & * & * & * & \cdots & * & \cdots & * & * &* & \cdots & * \\
* & * & \alpha_{13} &\cdots & * & * & * & * & \cdots & * & \cdots & * & * &* & \cdots & * \\
\vdots & \vdots & \vdots & & \vdots & \vdots & \vdots & \vdots & & \vdots &  &\vdots & \vdots & \vdots & & \vdots \\
* & * & * &\cdots & \alpha_{1n} & * & * & * & \cdots & * & \cdots & * & * &* & \cdots & * \\ \hline
* & * & * &\cdots & * & \alpha_{21} & * & * & \cdots & * & \cdots & * & * &* & \cdots & * \\
r_{21} & * & * &\cdots & * & * & \alpha_{22} & * & \cdots & * & \cdots & * & * &* & \cdots & r_{2n} \\
* & * & * &\cdots & * & * & * & \alpha_{23} & \cdots & * & \cdots & * & * &* & \cdots & * \\
\vdots & \vdots & \vdots & & \vdots & \vdots & \vdots & \vdots & & \vdots &  &\vdots & \vdots & \vdots & & \vdots \\
* & * & * &\cdots & * & * & * & * & \cdots & \alpha_{2n} & \cdots & * & * &* & \cdots & * \\ \hline
\vdots & \vdots & \vdots & & \vdots & \vdots & \vdots & \vdots & & \vdots &  &\vdots & \vdots & \vdots & & \vdots \\\hline
* & * & * &\cdots & * & * & * & * & \cdots & * & \cdots & \alpha_{n1} & * &* & \cdots & * \\
* & * & * &\cdots & * & * & * & * & \cdots & * & \cdots & * & \alpha_{n2} & * & \cdots & * \\
* & * & * &\cdots & * & * & * & * & \cdots & * & \cdots & * & * & \alpha_{n3} & \cdots & * \\
\vdots & \vdots & \vdots & & \vdots & \vdots & \vdots & \vdots & & \vdots &  &\vdots & \vdots & \vdots & & \vdots \\
r_{n1} & * & * &\cdots & * & * & r_{n2} & * & \cdots & * & \cdots & * & * &* & \cdots & \alpha_{nn}
\end{array}\right)
$$
while
$$
\rho^\Gamma=\left(\begin{array}{ccccc|ccccc|c|ccccc}
\alpha_{11} & * & * &\cdots & * & * & * & * & \cdots & * & \cdots & * & * &* & \cdots & * \\
* & \alpha_{12} & * &\cdots & * & r_{12} & * & * & \cdots & * & \cdots & * & * &* & \cdots & * \\
* & * & \alpha_{13} &\cdots & * & * & * & * & \cdots & * & \cdots & * & * &* & \cdots & * \\
\vdots & \vdots & \vdots & & \vdots & \vdots & \vdots & \vdots & & \vdots &  &\vdots & \vdots & \vdots & & \vdots \\
* & * & * &\cdots & \alpha_{1n} & * & * & * & \cdots & * & \cdots & r_{1n} & * &* & \cdots & * \\ \hline
* & r_{21} & * &\cdots & * & \alpha_{21} & * & * & \cdots & * & \cdots & * & * &* & \cdots & * \\
* & * & * &\cdots & * & * & \alpha_{22} & * & \cdots & * & \cdots & * & * & * & \cdots & * \\
* & * & * &\cdots & * & * & * & \alpha_{23} & \cdots & * & \cdots & * & * & * & \cdots & * \\
\vdots & \vdots & \vdots & & \vdots & \vdots & \vdots & \vdots & & \vdots &  &\vdots & \vdots & \vdots & & \vdots \\
* & * & * &\cdots & * & * & * & * & \cdots & \alpha_{2n} & \cdots & * & r_{2n} &* & \cdots & * \\ \hline
\vdots & \vdots & \vdots & & \vdots & \vdots & \vdots & \vdots & & \vdots &  &\vdots & \vdots & \vdots & & \vdots \\\hline
* & * & * &\cdots & r_{n1} & * & * & * & \cdots & * & \cdots & \alpha_{n1} & * &* & \cdots & * \\
* & * & * &\cdots & * & * & * & * & \cdots & r_{n2} & \cdots & * & \alpha_{n2} & * & \cdots & * \\
* & * & * &\cdots & * & * & * & * & \cdots & * & \cdots & * & * & \alpha_{n3} & \cdots & * \\
\vdots & \vdots & \vdots & & \vdots & \vdots & \vdots & \vdots & & \vdots &  &\vdots & \vdots & \vdots & & \vdots \\
* & * & * &\cdots & * & * & * & * & \cdots & * & \cdots & * & * &* & \cdots & \alpha_{nn}
\end{array}\right)
$$
where $\rho^\Gamma= (\theta_n(\rho_{ij}))$ is the partially transposed $\rho$, and stars stand for any numbers.
Since $\rho$ and $\rho^\Gamma$ are positive definite, $\alpha_{ij}\geq 0$, $r_{ji}=\overline{r_{ij}}$, and
\begin{equation}
a_{ii}a_{jj}\geq |r_{ij}|^2,\qquad a_{ij}a_{ji}\geq |r_{ij}|^2,\qquad i\neq j.
\label{e:inrho}
\end{equation}
Now, observe that 
\begin{eqnarray}
\Tr(\rho C_{\Phi_A})&=&\sum_{i,j=1}^n a_{ij}\alpha_{ij}-2\sum_{1\leq i<j\leq n}\Re( r_{ij}) \nonumber \\
& \geq &
\sum_{i=1}^n a_{ii}\alpha_{ii} + \sum_{1\leq i<j\leq n}(a_{ij}\alpha_{ij}+a_{ji}\alpha_{ji})-2\sum_{1\leq i<j\leq n}|r_{ij}|\nonumber\\
&=&
\frac{1}{n-1}\sum_{1\leq i<j\leq n}(a_{ii}\alpha_{ii}+a_{jj}\alpha_{jj})+\sum_{1\leq i<j\leq n}(a_{ij}\alpha_{ij}+a_{ji}\alpha_{ji})-2\sum_{1\leq i<j\leq n}|r_{ij}| \nonumber \\ 
&\geq & 2\sum_{i<j}\left(\frac{1}{n-1}\sqrt{a_{ii}a_{jj}\alpha_{ii}\alpha_{jj}}+\sqrt{a_{ij}a_{ji}\alpha_{ij}\alpha_{ji}}-|r_{ij}|\right) \label{linia1}\\
&\geq &
2\sum_{i<j}|r_{ij}|\left(\frac{1}{n-1}\sqrt{a_{ii}a_{jj}}+\sqrt{a_{ij}a_{ji}}-1\right)\geq 0.\label{linia2}
\end{eqnarray}
The expression \eqref{linia1} was obtained by arithmetic-geometric mean inequality, while \eqref{linia2} is due to inequalities \eqref{e:inrho}.
Since $\rho$ is an arbitrary PPT matrix, we conclude that $\Phi_A$ is decomposable (cf. Theorem \ref{t:dec}(ii)).
\end{proof}
For $n=2$ we immediately get the following corollary (cf. Remark \ref{u:1}).
\begin{corollary}
For $n=2$, a map $\phi_A$ is positive if and only if
$$\sqrt{a_{11}a_{22}}+\sqrt{a_{12}a_{21}}\geq 1.$$
\end{corollary}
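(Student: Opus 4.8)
The plan is to obtain the corollary by simply combining the two preceding theorems, using the elementary observation that when $n=2$ the factor $\tfrac{1}{n-1}$ equals $1$, so that the necessary condition \eqref{local} and the sufficient condition \eqref{nlocal} become literally the same inequality, namely $\sqrt{a_{11}a_{22}}+\sqrt{a_{12}a_{21}}\geq 1$.

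For the ``only if'' direction I would invoke the first of the two theorems of this section: if $\Phi_A:\bM_2\to\bM_2$ is positive, then \eqref{local} with $n=2$ and the (only) off-diagonal pair $(i,j)=(1,2)$ gives exactly $\sqrt{a_{11}a_{22}}+\sqrt{a_{12}a_{21}}\geq 1$. For the ``if'' direction, suppose $\sqrt{a_{11}a_{22}}+\sqrt{a_{12}a_{21}}\geq 1$; since $n-1=1$ this is precisely condition \eqref{nlocal} for $n=2$, so the second theorem yields that $\Phi_A$ is positive (and in fact decomposable).

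There is no genuine obstacle here: the only thing worth remarking is that for $n=2$ there is a single unordered off-diagonal pair $\{1,2\}$, so the families of constraints in \eqref{local} and \eqref{nlocal} each reduce to one inequality, and these two inequalities coincide; hence nothing is lost in passing between the necessary and the sufficient condition. This also recovers, as noted in Remark \ref{u:1}, the result of Kossakowski for $\bM_2\to\bM_2$ maps, cf.\ \eqref{aibi2}.
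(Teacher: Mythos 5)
Your argument is exactly the one the paper intends: the corollary is stated as an immediate consequence of the two preceding theorems, and for $n=2$ the factor $\tfrac{1}{n-1}=1$ makes the necessary condition \eqref{local} and the sufficient condition \eqref{nlocal} coincide in the single inequality $\sqrt{a_{11}a_{22}}+\sqrt{a_{12}a_{21}}\geq 1$. Your write-up is correct and matches the paper's (implicit) proof.
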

\begin{remark}
For $n=3$, the above theorem assures that the condition \eqref{c:2} is necessary for positivity of $\Phi_A$.
\end{remark}
\section{Condition \eqref{c:3}}
Now, we are going to discuss necessity of condition \eqref{c:3}. Let us start with the following
	\begin{theorem}
\label{t:aibc}
		Assume that the matrix \eqref{matrA} is of the form
		\begin{equation}
		A=\left(\begin{array}{ccc}
		a_{1} & b & c\\
		c & a_{2} & b\\
		b & c & a_{3}
		\end{array}\right)
		\label{maibc}
		\end{equation}
		for some arbitrary $a_1,a_2,a_3\geq 1$ and $b,c>0$. If the map $\Phi_A$ given by \eqref{PhiA} is positive then
		\begin{equation}
		(a_1a_2a_3)^{1/3}+b+c\geq 2.
		\label{aibc}
		\end{equation}
	\end{theorem}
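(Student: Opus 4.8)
The plan is to use the positivity criterion \eqref{eq:posit} established earlier: $\Phi_A$ is positive if and only if $\sum_{i,j}a_{ij}'p_i^2q_j^2\geq(\sum_i p_iq_i)^2$ for all $p_i,q_i\geq0$, where $a_{ij}'=a_{ij}+\delta_{ij}$ and $(a_{ij})$ is the matrix \eqref{maibc}. First I would restrict attention to the diagonal choice $p_i=q_i=t_i$. The left-hand side then becomes $\sum_i(a_i+1)t_i^4+\sum_{i\neq j}a_{ij}t_i^2t_j^2$; because each unordered pair $\{i,j\}$ occurs once with coefficient $b$ and once with coefficient $c$ in the cyclic pattern of \eqref{maibc}, i.e. $a_{ij}+a_{ji}=b+c$, the off-diagonal sum collapses to $(b+c)(t_1^2t_2^2+t_1^2t_3^2+t_2^2t_3^2)$. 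Expanding $(\sum_it_i^2)^2=\sum_it_i^4+2(t_1^2t_2^2+t_1^2t_3^2+t_2^2t_3^2)$ and cancelling $\sum_it_i^4$ from both sides leaves the single inequality
\begin{equation*}
a_1t_1^4+a_2t_2^4+a_3t_3^4\;\geq\;(2-b-c)\bigl(t_1^2t_2^2+t_1^2t_3^2+t_2^2t_3^2\bigr),\qquad t_i\geq0,
\end{equation*}
which carries all the information I shall use.

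If $b+c\geq2$ then \eqref{aibc} is immediate (indeed $(a_1a_2a_3)^{1/3}\geq1$ since each $a_i\geq1$), so I would assume $2-b-c>0$. Then I would substitute the explicit values $t_i=a_i^{-1/4}$ --- legitimate because $a_i\geq1>0$ --- which makes the left-hand side equal to the constant $3$, while the right-hand side equals $(2-b-c)\sum_{i<j}(a_ia_j)^{-1/2}$. The arithmetic--geometric mean inequality gives $\sum_{i<j}(a_ia_j)^{-1/2}\geq3(a_1a_2a_3)^{-1/3}$, whence $3\geq3(2-b-c)(a_1a_2a_3)^{-1/3}$; since $2-b-c>0$ this rearranges to precisely $(a_1a_2a_3)^{1/3}+b+c\geq2$.

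This is a one-shot choice of test vectors, so I do not foresee a genuine obstacle. The only places demanding a little care are: checking the structural identity $a_{ij}+a_{ji}=b+c$ that produces the symmetric combination $(b+c)\sum_{i<j}t_i^2t_j^2$ (this uses the precise cyclic placement of the $b$'s and $c$'s in \eqref{maibc} and would fail for a general nonnegative matrix $A$), and the routine bookkeeping of cancelling the $\sum_it_i^4$ terms; one should also keep in mind that taking $\xi=\eta$ with nonnegative entries is exactly the extremal configuration after optimising over phases, which is already built into the passage to \eqref{eq:posit}.
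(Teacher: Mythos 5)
Your proof is correct and follows essentially the same route as the paper: both evaluate the positivity condition on a single rank-one symmetric test state $\xi\otimes\xi$ with $\xi_i$ a negative power of $a_i$ (your restriction of \eqref{eq:posit} to $p=q=t$ is precisely block positivity of $C_A$ with $\xi=\eta$, and your identity $a_{ij}+a_{ji}=b+c$ is what makes the cyclic matrix \eqref{maibc} tractable). The only difference is the exponent: the paper takes $\xi_i=a_i^{-1/6}$, for which the quadratic form factors exactly as $\bigl(\sum_i a_i^{1/3}\bigr)\bigl((a_1a_2a_3)^{1/3}+b+c-2\bigr)$ with no case split, whereas your choice $t_i=a_i^{-1/4}$ requires the additional AM--GM step $\sum_{i<j}(a_ia_j)^{-1/2}\geq 3(a_1a_2a_3)^{-1/3}$ and the separate (trivial) case $b+c\geq 2$.
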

	\begin{proof}
		Observe that the Choi matrix of the map $\Phi_A$ is of the form
		\begin{equation}
		C_A=\left(
		\begin{array}{ccc|ccc|ccc}
		a_1 & \cdot & \cdot & \cdot & -1 & \cdot & \cdot & \cdot & -1 \\
		\cdot & b & \cdot & \cdot & \cdot & \cdot & \cdot & \cdot & \cdot \\
		\cdot & \cdot & c & \cdot & \cdot & \cdot & \cdot & \cdot & \cdot \\ \hline
		\cdot & \cdot & \cdot & c & \cdot & \cdot & \cdot & \cdot & \cdot \\
		-1 & \cdot & \cdot & \cdot & a_2 & \cdot & \cdot & \cdot & -1 \\
		\cdot & \cdot & \cdot & \cdot & \cdot & b & \cdot & \cdot & \cdot \\ \hline
		\cdot & \cdot & \cdot & \cdot & \cdot & \cdot & b & \cdot & \cdot \\
		\cdot & \cdot & \cdot & \cdot & \cdot & \cdot & \cdot & c & \cdot \\
		-1 & \cdot & \cdot & \cdot & -1 & \cdot & \cdot & \cdot & a_3
		\end{array}
		\right)  
		\end{equation}	
		It follows from positivity of $\Phi_A$ that $C_A$ is block positive, i.e.
		\begin{equation}
		\langle \xi\otimes\eta, C_A\xi\otimes\eta\rangle\geq 0
		\label{block}
		\end{equation}
		for every $\xi,\eta\in\mathbb{C}^3$.
		
		Let 
		\begin{equation}
		\xi=\eta=\left(
		\left(a_1^{-1}a_2^{\phantom{1}}a_3^{\phantom{1}}\right)^{\frac{1}{12}}, 
		\left(a_1^{\phantom{1}}a_2^{-1}a_3^{\phantom{1}}\right)^{\frac{1}{12}}, 
		\left(a_1^{\phantom{1}}a_2^{\phantom{1}}a_3^{-1}\right)^{\frac{1}{12}}
		\right)^\mathrm{T}.
		\end{equation}
		Then
		\begin{equation}
		\xi\otimes\eta=
		\left(\begin{array}{ccc|ccc|ccc}
		\left(a_1^{-1}a_2^{\phantom{1}}a_3^{\phantom{1}}\right)_{\phantom{1}}^\frac{1}{6}, & a_3^\frac{1}{6}, & a_2^\frac{1}{6} & a_3^\frac{1}{6}, & \left(a_1^{\phantom{1}}a_2^{-1}a_3^{\phantom{1}}\right)_{\phantom{1}}^\frac{1}{6}, & a_1^\frac{1}{6} & a_2^\frac{1}{6}, & a_1^\frac{1}{6}, & \left(a_1^{\phantom{1}}a_2^{\phantom{1}}a_3^{-1}\right)_{\phantom{1}}^\frac{1}{6}
		\end{array}
		\right)^\mathrm{T}.
		\end{equation}
		Hence
		\begin{eqnarray}
		\lefteqn{\langle \xi\otimes\eta,C_A\xi\otimes \eta\rangle=}\\
		&=&
		a_1^{\frac{2}{3}}a_2^{\frac{1}{3}}a_3^{\frac{1}{3}}+
		a_1^{\frac{1}{3}}a_2^{\frac{2}{3}}a_3^{\frac{1}{3}}+
		a_1^{\frac{1}{3}}a_2^{\frac{1}{3}}a_3^{\frac{2}{3}}+
		ba_3^\frac{1}{3}+
		ba_1^\frac{1}{3}+
		ba_2^\frac{1}{3}\\
&&{}+ca_2^\frac{1}{3}+
		ca_3^\frac{1}{3}+
		ca_1^\frac{1}{3} 
		-2a_3^\frac{1}{3}-2a_1^\frac{1}{3}-2a_2^\frac{1}{3}\\
		&=&
		\left(a_1^\frac{1}{3}+a_2^\frac{1}{3}+a_3^\frac{1}{3}\right)
		\left(\left(a_1a_2a_3\right)^\frac{1}{3}+b+c-2\right)
		\end{eqnarray}
		Thus, inequality \eqref{block} leads to \eqref{aibc}.
	\end{proof}
	\begin{remark}
		The vector $\xi$ in the above proof can be taken even simpler:
		\begin{equation}
		\xi=\eta=\left(a_1^{-\frac{1}{6}},a_2^{-\frac{1}{6}},a_3^{-\frac{1}{6}}\right)^\mathrm{T}.
		\label{simpxi}
		\end{equation}
	\end{remark}
	\begin{remark}
		For the case $n=2$ one can prove a similar result for $$A=\left(\begin{array}{cc} a_1 & b_1 \\ b_2 & a_2 \end{array}\right)$$
		where $a_i\geq 1$, $b_i>0$ are arbitrary. In this case one takes
		\begin{equation}
		\xi=\left(a_1^{-\frac{1}{4}}b_2^{\frac{1}{4}}, a_2^{-\frac{1}{4}}b_1^{\frac{1}{4}}\right)^\mathrm{T},\qquad
		\eta= \left(a_1^{-\frac{1}{4}}b_2^{-\frac{1}{4}}, a_2^{-\frac{1}{4}}b_1^{-\frac{1}{4}}\right)^\mathrm{T}
		\end{equation}
	\end{remark}
	\begin{theorem}
		Assume that the matrix \eqref{matrA} is of the form
		\begin{equation}
		A=\left(\begin{array}{ccc}
		a_{1} & b_1 & 0\\
		0 & a_{2} & b_2\\
		b_3 & 0 & a_{3}
		\end{array}\right)
		\end{equation}
		for some arbitrary $a_1,a_2,a_3\geq 1$ and $b_1,b_2,b_3>0$. If the map $\Phi_A$ given by \eqref{PhiA} is positive then
		\begin{equation}
		(a_1a_2a_3)^{1/3}+(b_1b_2b_3)^{1/3}\geq 2.
		\label{aibi}
		\end{equation}	
	\end{theorem}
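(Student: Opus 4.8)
The plan is to reduce the statement to Theorem \ref{t:aibc} by symmetrizing the three nonzero off‑diagonal entries of $A$ through a diagonal congruence, the same device that produced the maps $\Phi^V$ in Section 2. Here $A$ is the matrix \eqref{matrA} with the given $b_1,b_2,b_3$ and with $c_1=c_2=c_3=0$. First I would note that for any positive diagonal $D=\textrm{diag}(d_1,d_2,d_3)$, passing from $A$ to $DAD^{-1}$ (whose $(i,j)$‑entry is $d_iA_{ij}d_j^{-1}$) does not affect positivity of the associated map: this is immediate from the criterion \eqref{eq:posit} after the rescaling $p_i\mapsto p_i/\sqrt{d_i}$, $q_i\mapsto\sqrt{d_i}\,q_i$. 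Such a congruence fixes the diagonal and the zero entries of $A$ and multiplies $b_1,b_2,b_3$ by factors whose product equals $1$; writing $b_*:=(b_1b_2b_3)^{1/3}$ and taking $d_1=1$, $d_2=b_1/b_*$, $d_3=b_1b_2/b_*^2$ turns $A$ into
\[
DAD^{-1}=\begin{pmatrix} a_1 & b_* & 0\\ 0 & a_2 & b_*\\ b_* & 0 & a_3\end{pmatrix}.
\]

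Consequently, if $\Phi_A$ is positive then $\Phi_{DAD^{-1}}$ is a positive map whose defining matrix has exactly the form \eqref{maibc} with $b=b_*>0$ and $c=0$. Theorem \ref{t:aibc} is stated for $b,c>0$, but its proof only tests block‑positivity of the Choi matrix against a single product vector (the one in the remark following Theorem \ref{t:aibc}) and reads off the resulting factorization $\langle\xi\otimes\eta,C_A\xi\otimes\eta\rangle=(\text{positive})\cdot\big((a_1a_2a_3)^{1/3}+b+c-2\big)$; nothing in this uses $c>0$, so it applies verbatim for $c=0$. Applied to $DAD^{-1}$ it gives $(a_1a_2a_3)^{1/3}+b_*\geq2$, which is \eqref{aibi}.

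A fully self‑contained variant avoids the reduction. Keeping $A$ as it is, test block‑positivity of $C_A$ against $\xi,\eta\in\mathbb{C}^3$ with $\xi_i\eta_i=a_i^{-1/2}$ and with the ratios $\xi_i/\eta_i$ chosen cyclically so that $b_i(\xi_i\eta_{i+1})^2=b_*\,\xi_i\eta_i\xi_{i+1}\eta_{i+1}$ for $i=1,2,3$ (indices mod $3$; possible because $b_1b_2b_3=b_*^3$); a short computation then gives $\langle\xi\otimes\eta,C_A\xi\otimes\eta\rangle=3-(2-b_*)S$ with $S=(a_1a_2)^{-1/2}+(a_2a_3)^{-1/2}+(a_3a_1)^{-1/2}$. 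If $b_*\geq2$ then \eqref{aibi} holds trivially since $a_i\geq1$; otherwise block‑positivity forces $(2-b_*)S\leq3$, and the arithmetic--geometric mean bound $S\geq3(a_1a_2a_3)^{-1/3}$ yields \eqref{aibi}. Neither route presents a real obstacle; the only point worth checking is that the degenerate value $c=0$ changes nothing---the chosen test vectors and the algebraic identities invoked do not depend on $c$ being strictly positive.
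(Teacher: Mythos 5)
Your proposal is correct, and in fact contains two valid arguments. Your second, ``self-contained'' route is essentially the paper's own proof: the authors also test block-positivity of $C_A$ against a single product vector $\xi\otimes\eta$ whose components carry factors $b_i^{\pm 1/6}$ chosen precisely so that each term $b_i\xi_i^2\eta_{i+1}^2$ collapses to $b_*$ times the corresponding cross term. The only difference is normalization: the paper takes $\xi_i\eta_i=a_i^{-1/3}$, which makes the quadratic form factor exactly as $\bigl(\sum_i a_i^{1/3}\bigr)a_*^{-1}\bigl(a_*+b_*-2\bigr)$ with no further inequality needed, whereas your choice $\xi_i\eta_i=a_i^{-1/2}$ requires the extra AM--GM step $S\geq 3(a_1a_2a_3)^{-1/3}$ (and the harmless case split $b_*\geq 2$). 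Your first route --- conjugating $A$ by a positive diagonal matrix to equalize $b_1,b_2,b_3$ at $b_*$ and then invoking Theorem \ref{t:aibc} with $c=0$ --- is a genuinely different and arguably cleaner organization: it isolates the rescaling invariance (the same $\Phi^V$ device the paper uses in Section 2 only for sufficiency) and makes transparent why the test vector of the previous theorem reappears here decorated with $b_i^{\pm1/6}$. Your observation that Theorem \ref{t:aibc}, though stated for $b,c>0$, holds verbatim at $c=0$ is accurate, since its proof only substitutes a fixed vector and $c$ enters linearly. Both routes are sound; no gaps.
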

	\begin{proof}
		The same idea as above. Now, we consider the following wectors $\xi$ and $\eta$ (see \eqref{simpxi}):
		\begin{equation}
		\xi=\left(\begin{array}{c}
		a_1^{-\frac{1}{6}}b_1^{-\frac{1}{6}}b_3^{\frac{1}{6}} \\[2mm]
		a_2^{-\frac{1}{6}}b_2^{-\frac{1}{6}}b_1^{\frac{1}{6}} \\[2mm]
		a_3^{-\frac{1}{6}}b_3^{-\frac{1}{6}}b_2^{\frac{1}{6}}
		\end{array}\right),\qquad
		\eta=\left(\begin{array}{c}
		a_1^{-\frac{1}{6}}b_1^{\frac{1}{6}}b_3^{-\frac{1}{6}} \\[2mm]
		a_2^{-\frac{1}{6}}b_2^{\frac{1}{6}}b_1^{-\frac{1}{6}} \\[2mm]
		a_3^{-\frac{1}{6}}b_3^{\frac{1}{6}}b_2^{-\frac{1}{6}}
		\end{array}\right)
		\end{equation}
Direct calculations, like in the proof of Theorem \ref{t:aibc}, lead to the condition \eqref{aibi}.
	\end{proof}
\section{Problem of sufficiency}	
		Let us consider the matrix $A$ given by \eqref{maibc}. We have shown that the following conditions (cf. \eqref{local}, \eqref{aibc})
		$$(a_ia_{i+1})^{1/2}+(bc)^{1/2}\geq 1,\qquad i=1,2,3,$$
		$$(a_1a_2a_3)^{1/3}+b+c\geq 2.$$
		are necessary for positivity of $\Phi_A$. 
		For $a_1=a_2=a_3$ these condition are also sufficient \cite{CKL92}. 
Note that they are special cases of conditions \eqref{c:2} and \eqref{c:3}.
		
However, these conditions are not sufficient for general case. 
\begin{example}
Let us consider $a_1=\frac{1}{2}$, $a_2=1$, $a_3=2$, $b=1$, $c=0$. 
One checks that conditions \eqref{c:2} and \eqref{c:3} are satisfied.
Let 
$$X=\left(\begin{array}{ccc}
2^{2/3} & 2^{1/6} & 2^{1/6} \\
2^{1/6} & 2^{-1/3} & 2^{-1/3} \\
2^{1/6} & 2^{-1/3} & 2^{-1/3}
\end{array}\right).
$$
One can verify that all principal minor of $X$ are nonnegative, hence $X$ is positive definite.
$$\Phi_A\left(X\right)=
		\left(\begin{array}{ccc} 
2^{2/3} & -2^{1/6} & -2^{1/6} \\ -2^{1/6} &2^{2/3} & -2^{-1/3} \\ -2^{1/6} & -2^{-1/3} & 2^{5/3}\end{array}\right) .$$
Since determinant of the last matrix is equal to $-1$,	it is not positive definite. Hence $\Phi_A$ is not positive.
\end{example}

One can show that the phenomena described in the above example is of a general nature. Namely, we have the following no-go result.
\begin{prop}
Let $a_1,a_2,a_3>0$ and $b\geq 0$ be such that
$$(a_1a_2a_3)^{1/3}+b=2.$$
Then $\Phi_A$ is positive if and only if $a_1=a_2=a_3$.
\end{prop}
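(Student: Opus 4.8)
The plan is to prove the two implications separately; the real content is the \textbf{only if} direction, where the normalization $(a_1a_2a_3)^{1/3}+b=2$ is exactly what turns block positivity into a first-order optimality condition. The \textbf{if} direction is immediate: if $a_1=a_2=a_3=a$, then the matrix $A$ in \eqref{maibc} (with $c=0$) coincides with $A_{[a,b,0]}$, so $\Phi_A=\Phi_{[a,b,0]}$, and the normalization reads $a+b=2$. In the regime $a\ge 1$ of this section (cf.\ Theorem \ref{t:aibc}) one checks conditions (1)--(3) of Theorem \ref{TH-korea}: (2) holds with equality, (1) holds for $a<2$, and (3) is vacuous for $a>1$ while it reads $0\ge0$ for $a=1$; hence $\Phi_A$ is positive when $a<2$. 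For $a=2$ we have $b=0$ and the matrix \eqref{e:ACP} is the positive semidefinite matrix with diagonal $2$ and off-diagonal entries $-1$, so $\Phi_A$ is then even completely positive. In all cases $\Phi_A$ is positive.

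For the only if direction, assume $\Phi_A$ is positive. By Theorem \ref{t:dec}(i) the Choi matrix $C_A$ is block-positive; in particular $\langle\xi\otimes\xi,C_A\xi\otimes\xi\rangle\ge 0$ for every real vector $\xi=(\xi_1,\xi_2,\xi_3)$. Reading off the matrix $C_A$ (as displayed in the proof of Theorem \ref{t:aibc}, specialized to $c=0$): the entries $a_i$ sit on the $(i,i)$ diagonal positions and contribute $\sum_i a_i\xi_i^4$, the three diagonal positions carrying the weight $b$ contribute $b\sum_{i<j}\xi_i^2\xi_j^2$, and the off-diagonal $-1$'s contribute $-2\sum_{i<j}\xi_i^2\xi_j^2$, so
\[
\langle\xi\otimes\xi,C_A\xi\otimes\xi\rangle=\sum_{i=1}^3 a_i\xi_i^4+(b-2)\sum_{1\le i<j\le 3}\xi_i^2\xi_j^2 .
\]
Writing $t_i=\xi_i^2$ and using $b-2=-(a_1a_2a_3)^{1/3}$, positivity of $\Phi_A$ forces
\[
F(t_1,t_2,t_3):=\sum_{i=1}^3 a_i t_i^2-(a_1a_2a_3)^{1/3}\!\!\sum_{1\le i<j\le 3}\!\! t_it_j\ \ge\ 0\qquad\text{for all }t_1,t_2,t_3\ge 0 .
\]

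The key step is to observe that $F$ vanishes at $t^\ast:=\bigl(a_1^{-1/3},a_2^{-1/3},a_3^{-1/3}\bigr)$: indeed $a_i(t_i^\ast)^2=a_i^{1/3}$, while $(a_1a_2a_3)^{1/3}\,t_i^\ast t_j^\ast=a_k^{1/3}$ whenever $\{i,j,k\}=\{1,2,3\}$, so both terms of $F(t^\ast)$ equal $a_1^{1/3}+a_2^{1/3}+a_3^{1/3}$ and $F(t^\ast)=0$. Since $a_i>0$, the point $t^\ast$ lies in the open positive orthant, so the inequality $F\ge 0$ holds on a full $\bR^3$-neighborhood of $t^\ast$; as $F$ is smooth, $t^\ast$ is a local minimum and $\nabla F(t^\ast)=0$. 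Computing the partials,
\[
\frac{\partial F}{\partial t_i}(t^\ast)=2a_i t_i^\ast-(a_1a_2a_3)^{1/3}(t_j^\ast+t_k^\ast)=a_i^{1/3}\bigl(2a_i^{1/3}-a_j^{1/3}-a_k^{1/3}\bigr),
\]
so $2a_i^{1/3}=a_j^{1/3}+a_k^{1/3}$ for every $i$; subtracting the relation for $i$ from the one for $j$ yields $3a_i^{1/3}=3a_j^{1/3}$. Hence $a_1=a_2=a_3$, as claimed.

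There is no genuine obstacle once one notices that the constraint $(a_1a_2a_3)^{1/3}+b=2$ is precisely what makes $F(t^\ast)=0$: block positivity, restricted to the ``diagonal'' product vectors $\xi\otimes\xi$ alone, then collapses to the vanishing of $\nabla F$ at an interior zero of $F$, and the rest is elementary linear algebra. The only point deserving a word of care is checking that $F\ge 0$ persists on a whole neighborhood of $t^\ast$ in $\bR^3$ (it does, because every coordinate of $t^\ast$ is strictly positive), which is what legitimizes passing to first-order conditions.
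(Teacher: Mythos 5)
Your proof is correct, and the necessity direction takes a genuinely different route from the paper's. The paper picks the single vector $\xi=\bigl(a_1^{-1/6}a_3^{1/6},a_2^{-1/6}a_1^{1/6},a_3^{-1/6}a_2^{1/6}\bigr)^{\mathrm T}$, computes $\det\Phi_A(\xi\xi^*)$ explicitly, finds it equals $6a_*^{-1}(a_*-\overline a)$ under the normalization $a_*+b=2$, and concludes from $\det\geq 0$ together with the AM--GM inequality $a_*\leq\overline a$ that equality must hold. You instead test block positivity on the whole family of diagonal product vectors, reduce it to nonnegativity of the quadratic form $F(t)=\sum_i a_it_i^2-a_*\sum_{i<j}t_it_j$ on the orthant, exhibit the interior zero $t^*=(a_i^{-1/3})$, and extract the conclusion from the first-order condition $\nabla F(t^*)=0$. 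Your route avoids the determinant computation and makes transparent why the normalization $a_*+b=2$ is the critical value (it is exactly what places the zero of $F$ in the interior); the paper's route is a one-line verification once the determinant identity is believed, and packages the "equality case" into AM--GM rather than into a gradient condition. Both arguments are valid. One shared caveat: the sufficiency direction genuinely requires $a\geq 1$ (for $a_1=a_2=a_3=a<1$ and $c=0$, condition (3) of Theorem \ref{TH-korea} fails and $\Phi_{[a,2-a,0]}$ is not positive), so the proposition as literally stated with only $a_i>0$ needs this restriction; you at least flag the regime $a\geq1$ explicitly, whereas the paper's one-sentence appeal to Theorem \ref{TH-korea} does not.
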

\begin{proof}
Sufficiency part follows from Theorem \ref{TH-korea}. We will show necessity. Assume that $\Phi_A$ is a positive map.
Let $$\xi=\left( a_1^{-\frac{1}{6}}a_3^{\frac{1}{6}},\;a_2^{-\frac{1}{6}}a_1^{\frac{1}{6}},\; a_3^{-\frac{1}{6}}a_2^{\frac{1}{6}}\right)^\mathrm{T}.$$
Then
$$\xi\xi^*=\left(\begin{array}{ccc}
a_1^{-\frac{1}{3}}a_3^{\frac{1}{3}} & a_2^{-\frac{1}{6}}a_3^{\frac{1}{6}} & a_1^{-\frac{1}{6}}a_2^{\frac{1}{6}} \\[2mm]
a_2^{-\frac{1}{6}}a_3^{\frac{1}{6}} & a_2^{-\frac{1}{3}}a_1^{\frac{1}{3}} & a_3^{-\frac{1}{6}}a_1^{\frac{1}{6}} \\[2mm] a_1^{-\frac{1}{6}}a_2^{\frac{1}{6}} & a_3^{-\frac{1}{6}}a_1^{\frac{1}{6}} & a_3^{-\frac{1}{3}}a_2^{\frac{1}{3}} 
\end{array}\right)$$
and
$$
\Phi_A(\xi\xi^*)=\left(\begin{array}{ccc}
a_1^{\frac{2}{3}}a_3^{\frac{1}{3}} + ba_2^{-\frac{1}{3}}a_1^{\frac{1}{3}} & -a_2^{-\frac{1}{6}}a_3^{\frac{1}{6}} & -a_1^{-\frac{1}{6}}a_2^{\frac{1}{6}} \\[2mm]
-a_2^{-\frac{1}{6}}a_3^{\frac{1}{6}} & a_2^{\frac{2}{3}}a_1^{\frac{1}{3}} + ba_3^{-\frac{1}{3}}a_2^{\frac{1}{3}} & -a_3^{-\frac{1}{6}}a_1^{\frac{1}{6}} \\[2mm] -a_1^{-\frac{1}{6}}a_2^{\frac{1}{6}} & -a_3^{-\frac{1}{6}}a_1^{\frac{1}{6}} & a_3^{\frac{2}{3}}a_2^{\frac{1}{3}} + ba_1^{-\frac{1}{3}}a_3^{\frac{1}{3}} 
\end{array}\right).
$$
Calculate the determinant of the above matrix
\begin{eqnarray*}
\lefteqn{\det \Phi_A(\xi\xi^*)=}\\
&=& \left(a_1^{\frac{2}{3}}a_3^{\frac{1}{3}} + ba_2^{-\frac{1}{3}}a_1^{\frac{1}{3}}\right)\left(a_2^{\frac{2}{3}}a_1^{\frac{1}{3}} + ba_3^{-\frac{1}{3}}a_2^{\frac{1}{3}}\right)\left(a_3^{\frac{2}{3}}a_2^{\frac{1}{3}} + ba_1^{-\frac{1}{3}}a_3^{\frac{1}{3}}\right) - 2 \\
&&{}-a_1^{-\frac{1}{3}}a_2^{\frac{1}{3}}\left(a_2^{\frac{2}{3}}a_1^{\frac{1}{3}} + ba_3^{-\frac{1}{3}}a_2^{\frac{1}{3}}\right) -a_3^{-\frac{1}{3}}a_1^{\frac{1}{3}}\left(a_1^{\frac{2}{3}}a_3^{\frac{1}{3}} + ba_2^{-\frac{1}{3}}a_1^{\frac{1}{3}}\right) -a_2^{-\frac{1}{3}}a_3^{\frac{1}{3}}\left(a_3^{\frac{2}{3}}a_2^{\frac{1}{3}} + ba_1^{-\frac{1}{3}}a_3^{\frac{1}{3}}\right) \\
&=&
a_1a_2a_3 +3ba_1^{\frac{2}{3}}a_2^{\frac{2}{3}}a_3^{\frac{2}{3}}+3b^2a_1^{\frac{1}{3}}a_2^{\frac{1}{3}}a_3^{\frac{1}{3}}+b^3-2
\\
&&
{}-a_2-ba_1^{-\frac{1}{3}}a_2^{\frac{2}{3}}a_3^{-\frac{1}{3}} -a_1-ba_1^{\frac{2}{3}}a_2^{-\frac{1}{3}}a_3^{-\frac{1}{3}} -a_3-ba_1^{-\frac{1}{3}}a_2^{-\frac{1}{3}}a_3^{\frac{2}{3}} \\
&=&
(a_*+b)^3-2-3\overline{a}-3b\overline{a}a_*^{-1}\\
&=& a_*^{-1}\left(6a_*-3a_*\overline{a}-3b\overline{a}\right) =6a_*^{-1}\left(a_*-\overline{a}\right)
\end{eqnarray*}
In the last line the assumption that $a_*+b=2$ was applied. It follows from positivity of $\Phi_A$ that $a_*\geq \overline{a}$. Hence $a_*=\overline{a}$ and consequently all $a_i$ are equal.
\end{proof}
\begin{remark}
The above considerations that \eqref{c:2} and \eqref{c:3} do not form a set of sufficient conditions for positivity of the map $\Phi_A$ in the case when $a_i$ are distinct numbers. At this stage it still an open problem how to generalize Theorems \ref{TH-korea} and \ref{TH-ccc} for arbitrary numbers $a_i$, $b_j$, $c_k$.
\end{remark} 
\section*{Acknowledgements}
DC  was supported by the Polish National Science Centre project 2015/19/B/ST1/03095, while MM was supported by Templeton Foundation Project "Quantum objectivity: between the whole and the parts".

\bibliographystyle{abbrv}
\bibliography{positive}

\begin{thebibliography}{10}

\bibitem{CKL92}
S.-J. Cho, S.-H. Kye, and S.~G. Lee.
\newblock Generalized choi maps in 3-dimensional matrix algebras.
\newblock {\em Lin. Alg. Appl.}, 171:213–--224, 1992.

\bibitem{Choi75}
M.-D. Choi.
\newblock Completely positive linear maps on complex matrices.
\newblock {\em Lin. Alg. Appl.}, 10(3):285--290, 1975.

\bibitem{Choi75b}
M.-D. Choi.
\newblock Positive semidefinite biquadratic forms.
\newblock {\em Lin. Alg. Appl.}, 12:95--100, 1975.

\bibitem{ChoiLam77}
M.~D. Choi and T.~Y. Lam.
\newblock Extremal positive semidefinite forms.
\newblock {\em Math. Ann.}, 231:1--18, 1977.

\bibitem{ChruSarb2012b}
D.~Chru{\'s}ci{\'n}ski and G.~Sarbicki.
\newblock Exposed positive maps in {$M_4(C)$}.
\newblock {\em Open Sys. Inf. Dyn.}, 19:1250017, 2012.

\bibitem{ChruSarb2014}
D.~Chru{\'s}ci{\'n}ski and G.~Sarbicki.
\newblock Entanglement witnesses: construction, analysis and classification.
\newblock {\em J. Phys. A: Math. Theor.}, 47:483001, 2014.

\bibitem{Ha98}
K.-C. Ha.
\newblock Atomic positive linear maps in matrix algebras.
\newblock {\em Publ. RIMS. Kyoto Univ.}, 134:199, 1998.

\bibitem{Ha02}
K.-C. Ha.
\newblock Positive projections onto spin factors.
\newblock {\em Linear Algebr. Appl.}, 348:105, 2002.

\bibitem{Ha03}
K.-C. Ha.
\newblock A class of atomic positive linear maps in matrix algebras.
\newblock {\em Lin. Alg. Appl.}, 359:277--290, 2003.

\bibitem{HHH96}
M.~Horodecki, P.~Horodecki, and R.~Horodecki.
\newblock Separability of mixed states: necessary and sufficiant conditions.
\newblock {\em Phys. Lett. A}, 223:1--8, 1996.

\bibitem{Jam74}
A.~Jamio{\l}kowski.
\newblock An effective methods of investigation of positive maps on the set of
  positive definite operators.
\newblock {\em Rep. Math. Phys.}, 5:415--424, 1974.

\bibitem{Kye92}
S.-H. Kye.
\newblock A class of atomic positive maps in 3-dimensional matrix algebras.
\newblock In M.~Mathieu, editor, {\em Elementary operators and applications}.
  World Scientific, 1992.

\bibitem{MajMar01}
W.~A. Majewski and M.~Marciniak.
\newblock On a characterization of positive maps.
\newblock {\em J. Phys. A: Math. Gen.}, 34:5863, 2001.

\bibitem{MarRut17}
M.~Marciniak and A.~Rutkowski.
\newblock Merging of positive maps: A construction of various classes of
  positive maps on matrix algebras.
\newblock {\em Lin. Alg. Appl.}, 529:215--257, 2017.

\bibitem{MilOlk15}
M.~Miller and R.~Olkiewicz.
\newblock Stable subspaces of positive maps of matrix algebras.
\newblock {\em Open Syst. Inf. Dyn.}, 22:1550011, 2015.

\bibitem{Osaka91}
H.~Osaka.
\newblock Indecomposable positive maps in low dimensional matrix algebras.
\newblock {\em Lin. Alg. Appl.}, 153:73--83, 1991.

\bibitem{Osaka92}
H.~Osaka.
\newblock A class of extremal positive maps on $3 \times 3$ matrix algebras.
\newblock {\em Publ. RIMS, Kyoto Univ.}, 28:747--756, 1992.

\bibitem{Osaka93}
H.~Osaka.
\newblock A series of absolutely indecomposable positive maps in matrix
  algebras.
\newblock {\em Linear Algebr. Appl.}, 186:45, 1993.

\bibitem{Robertson83}
A.~G. Robertson.
\newblock Schwarz inequalities and decomposition of positive maps on
  c*-algebras.
\newblock {\em Math. Proc. Cambridge Philos. Soc.}, 94:291--296, 1983.

\bibitem{Sto63}
E.~St{\o}rmer.
\newblock Positive linear maps of operator algebras.
\newblock {\em Acta Math.}, 110:233--278, 1963.

\bibitem{Sto82}
E.~St{\o}rmer.
\newblock Decomposable positive maps on {C}*-algebras.
\newblock {\em Proc. Amer. Math. Soc.}, 86:402--404, 1982.

\bibitem{Sto12}
E.~St{\o}rmer.
\newblock {\em Positive linear maps of operator algebras}.
\newblock Springer, 2012.

\bibitem{TanTom88}
K.~Tanabashi and J.~Tomiyama.
\newblock Indecomposable positive maps in matrix algebras.
\newblock {\em Canad. Math. Bull.}, 31:308--317, 1988.

\bibitem{Tang86}
W.-S. Tang.
\newblock On positive linear maps between matrix algebras.
\newblock {\em Linear Algebra Appl.}, 79:33--44, 1986.

\bibitem{Wor76a}
S.~L. Woronowicz.
\newblock Positive maps of low dimensional matrix algebras.
\newblock {\em Rep. Math. Phys.}, 10:165--183, 1976.

\end{thebibliography}

\end{document}